\documentclass[11pt]{article}

\usepackage{xcolor}
\definecolor{capa}{RGB}{50, 90, 160}
\definecolor{delftblue}{RGB}{0,61,165}
\usepackage[colorlinks=true,
  linkcolor=delftblue,
  urlcolor=delftblue,
  citecolor=delftblue]{hyperref}
\newcommand{\Id}{{{\color{red}\mathsf{I}}}}

\usepackage{enumitem}

\usepackage{multirow}
\usepackage{amsmath}
\usepackage{amsfonts}
\usepackage{amsmath}
\usepackage{amssymb}  
\usepackage{graphicx}
\usepackage{caption}
\usepackage{mathrsfs}
\usepackage{upgreek}
\usepackage{amsthm}
\usepackage{subfig}
\usepackage{booktabs}
\usepackage{authblk}
\usepackage{cite}
\usepackage{bbm}
\usepackage{url}

\usepackage[]{algorithm2e}

\newtheorem{theorem}{Theorem}[section]
\newtheorem{lemma}[theorem]{Lemma}
\newtheorem{proposition}[theorem]{Proposition}
\newtheorem{corollary}[theorem]{Corollary}
\newtheorem{remark}[theorem]{Remark}
\newtheorem{definition}[theorem]{Definition}

\usepackage{mathtools}
\usepackage{bm}
\newcommand{\rr}{\bm{r}}
\newcommand{\nn}{\mathbf{n}}
\newcommand{\aaa}{\mathbf{a}}
\newcommand{\vv}{\mathbf{v}}
\newcommand{\kk}{\mathbf{k}}
\newcommand{\bphi}{\bm{\phi}}
\newcommand{\Spot}{\mathcal{S}}
\newcommand{\Dpot}{\mathcal{D}}
\newcommand{\Gpot}{\mathcal{G}}          %
\newcommand{\Vop}{\mathsf{V}}
\newcommand{\Kop}{\mathsf{K}}
\newcommand{\Ktop}{\widetilde{\mathsf{K}}}
\newcommand{\Wop}{\mathsf{W}}
\newcommand{\Cop}{\mathsf{C}}            %
\newcommand{\Lbf}{\mathbb{L}}           %
\newcommand{\Ebf}{\mathbb{E}}
\renewcommand{\Id}{\mathsf{Id}}
\newcommand{\Neta}{\mathsf{N}_\eta}
\newcommand{\Ue}{U_{\mathrm{e}}}
\newcommand{\Rvec}{\bm{R}}
\newcommand{\bgamma}{\bm{\gamma}}     %

\title{Photonic band structure by direct second-kind free-space boundary integral equations}

\author[1]{Carlos P\'erez-Arancibia\thanks{\href{mailto:c.a.perezarancibia@utwente.nl}{c.a.perezarancibia@utwente.nl}}}
\author[2]{Tao Yin\thanks{\href{mailto:yintao@lsec.cc.ac.cn}{yintao@lsec.cc.ac.cn}}}
\affil[1]{\small{Department of Applied Mathematics, University of Twente, Enschede, The Netherlands}}
\affil[2]{\small{State Key Laboratory of Mathematical Sciences and Institute of Computational Mathematics and Scientific/Engineering Computing, Academy of Mathematics and Systems Science, Chinese Academy of Sciences, Beijing 100190, China}}
\date{\today}

\begin{document}

\maketitle
\begin{abstract}
We present a direct boundary integral equation (BIE) formulation of the Bloch eigenvalue problem for
two-dimensional photonic crystals that uses only the free-space Helmholtz Green function.  The
unknowns are the Cauchy data of the Bloch mode on the inclusion boundary and on two cell walls.  On the inclusion boundary, the transmission conditions are imposed via M\"uller's formulation.  The wall equations, in turn, come from a stack formulation, in which Green's formula is applied on stacks of three cells, so that the wall self-interactions and the interactions between
arcs sharing endpoints cancel exactly.  The resulting system is of the second kind, Fredholm of index
zero, and free of corner singularities, and the walls may be curved to keep the cell boundary away
from the inclusions.  Its operators are assembled once per frequency and reused for all Bloch vectors.  We prove that every Bloch mode yields a nonzero null vector, and that a null vector
comes from a Bloch mode exactly when its cell representation vanishes outside the cell, which gives an
inexpensive test for spurious null vectors.  The band frequencies are computed with a high-order
Nystr\"om discretization and the search of Bruno, Santana and Trefethen~\cite{BST2026}, which locates them as
poles of adaptive Antoulas--Anderson (AAA) rational approximants to a randomly scalarized resolvent
sampled on the real frequency axis.
Numerical examples validate the method against exact and published band structures and include a
challenging triangular lattice of air pores in a dielectric, whose cell requires curved walls.
\end{abstract}

\noindent\textbf{Keywords:} boundary integral equations, photonic crystals, Bloch eigenvalue problem, Nystr\"om method, nonlinear eigenvalue problem, rational approximation

\noindent\textbf{MSC codes:} 65N38, 65N25, 78M15, 35P30, 41A20

\section{Introduction}
\label{sec:intro} Photonic crystals are periodic dielectric structures whose band structure, the relation between frequency and Bloch wave vector, determines how light propagates through them~\cite{JJWM2008,Kuc2001}. In two dimensions, computing the band structure amounts to solving, for each Bloch vector in the Brillouin zone, a Helmholtz eigenvalue problem on a unit cell with quasi-periodic boundary conditions. Standard methods for this computation include plane-wave expansions, as implemented in MPB~\cite{JJ2001}, finite elements~\cite{AK1999,Dobson1999,DBB2006}, and finite-difference time-domain (FDTD) methods~\cite{TH2005,Oskooi2010}. All of them discretize the entire cell, so resolving high dielectric contrast, curved interfaces or small geometric features may require a large number of unknowns. Moreover, plane-wave expansions converge slowly across material discontinuities, finite-difference grids represent curved interfaces only to low order, and finite element and finite-difference methods suffer from numerical dispersion, which becomes the pollution effect at high frequencies~\cite{BS1997}. 

Boundary integral equations (BIEs) discretize only interfaces and achieve high-order convergence for smooth geometries, at the cost of nonlinear dependence on frequency. The natural kernel is the quasi-periodic Green function, whose slowly convergent lattice sums have received considerable attention~\cite{CNM1994,MNBG2000,DHH2001,Moroz2001}. However, this Green function does not exist at empty-lattice eigenvalues, the analogue of Wood anomalies~\cite{LT2007}, and formulations based on it become ill-conditioned at frequencies close to these configurations~\cite{BG2010}. Related approaches use Rayleigh multipole and multiple-scattering expansions~\cite{LQ1993,NMB1995,Botten2003,SHE2003}, cell Dirichlet-to-Neumann maps~\cite{YLA2008}, wavelet-based boundary elements~\cite{WMX2022}, and periodic fast multipole acceleration~\cite{ON2008}. Since the Bloch problem is an eigenvalue problem that depends nonlinearly on the frequency, it can be treated by nonlinear eigenvalue methods~\cite{SP2005,GT2017}, contour integration~\cite{Beyn2012,ASTIK2009}, or rational approximation~\cite{CT2025,BST2026}; BIE algorithms and analysis for such eigenvalue problems are developed in~\cite{ABN2015,SU2012,SU2017}. An indirect formulation using only the free-space Green function was developed in~\cite{BG2010}. It combines inclusion layer potentials summed over a phased $3\times3$ block of nearest images with auxiliary single- and double-layer densities on two cell walls and their extensions. Enforcing transmission and quasi-periodicity yields a second-kind system with smooth densities, spectral accuracy, and dependence on the Bloch multipliers through finitely many monomials. Bands are located by scanning the smallest singular value of the system.

This paper develops a direct BIE counterpart, also based only on the free-space Green function, by extending the free-space formulation of~\cite{SFFP} from line arrays of penetrable obstacles to doubly periodic lattices. The unknowns are the Cauchy data of the Bloch mode on the inclusion boundary and two cell walls. Combining Green representations on the cell and three-cell stacks yields a second-kind BIE system without corner singularities, whose operator coefficients are assembled once per frequency and reused across Bloch vectors. We discretize this system by a high-order Nystr\"om method. For each Bloch vector, the band frequencies are the real frequencies at which the discrete operator is singular, and we locate them, following~\cite{BST2026}, as poles of AAA rational approximants to a randomly scalarized resolvent sampled along the real frequency axis. The direct formulation guarantees that every Bloch mode yields a nonzero null vector (Lemma~\ref{lem:necessity}), so no band is missed; for indirect formulations, the corresponding statement, that every Bloch mode admits a density representation, requires a separate argument (cf.~\cite[Conjecture~7]{BG2010}). The converse remains open, but we prove that every null vector satisfies the transmission conditions and that spurious null vectors are characterized by nonzero radiation from the cell representation, which provides an inexpensive a posteriori test. On the computational side, the stack representations involve no interactions between the inclusion and its nearest images, and curved walls, combined with the density interpolation method for nearly singular integrals~\cite{FPB2021}, accommodate inclusions that do not fit inside the primitive parallelogram.

The paper is organized as follows. Section~\ref{sec:problem} states the Bloch eigenvalue problem. Section~\ref{sec:layer-rep} introduces the layer potentials and the translation notation, and derives the Green's representations on which the formulation rests. Section~\ref{sec:bie} assembles the BIE system and establishes its symmetries and Fredholm property, and \S\ref{sec:spurious} characterizes its spurious null vectors. Section~\ref{sec:numerics} describes the discretization and the eigenvalue search, \S\ref{sec:experiments} presents the numerical experiments, and \S\ref{sec:conclusion} concludes.

\section{The Bloch eigenvalue problem} \label{sec:problem}
Let $\aaa_1,\aaa_2\in\mathbb{R}^2\setminus\{\mathbf 0\}$ be linearly independent primitive lattice vectors,
\begin{equation}
  \Lambda := \{\, \vv_{mn} := m\aaa_1+n\aaa_2 \;:\; (m,n)\in\mathbb{Z}^2 \,\},
  \label{eq:lattice}
\end{equation}
with reciprocal lattice $\Lambda^*=\{\mathbf{G}=m\mathbf{b}_1+n\mathbf{b}_2\}$ defined by $\aaa_i\cdot\mathbf{b}_j=2\pi\delta_{ij}$, where $\delta_{ij}$ is the Kronecker delta.
Let $U$ be a primitive cell of $\Lambda$: a bounded domain whose boundary is a simple closed curve and
whose lattice translates $\overline U+\vv$, $\vv\in\Lambda$, cover the plane with pairwise disjoint
interiors.  Let
$\Omega\subset U$ be the dielectric inclusion, with smooth boundary and
$\overline{\Omega}\cap\partial U=\emptyset$, and
\begin{equation}
  \Gamma_0 := \partial\Omega,
  \qquad
  D_2 := \bigcup_{\vv\in\Lambda}(\Omega+\vv),
  \qquad
  D_1 := \mathbb{R}^2\setminus\overline{D_2}.
\end{equation}

We require the cell walls to come in translation pairs, one per lattice direction:
\begin{equation}
  \partial U=\Gamma_1\cup(\Gamma_1+\aaa_1)\cup\Gamma_2\cup(\Gamma_2+\aaa_2),
  \label{eq:walls}
\end{equation}
so that the wall $\Gamma_j$ is paired with the lattice vector $\aaa_j$, $j\in\{1,2\}$, and the inclusion boundary carries the index $0$.  We assume that the four arcs in~\eqref{eq:walls} are distinct and meet only at their endpoints. Together with~$\Gamma_0$, the geometry is described by the three curves parametrized by
\begin{equation}
  \mathbf{r}_i=(\mathsf{x}_i,\mathsf{y}_i):I_i\to\Gamma_i,\quad i\in\{0,1,2\},
  \label{eq:wall-param}
\end{equation}
all regular and, for simplicity, $C^\infty$ (the walls up to their endpoints), with $I_0:=[0,2\pi)$ and $\mathbf{r}_0$ $2\pi$-periodic and positively oriented. The translated walls
$\Gamma_1+\aaa_1$ and $\Gamma_2+\aaa_2$ are therefore parameterized by $\mathbf{r}_1+\aaa_1$ on $I_1$ and $\mathbf{r}_2+\aaa_2$ on $I_2$.  We use the parametric normals
\begin{equation}
  \nn_i:=(\mathsf{y}'_i,-\mathsf{x}'_i)/|\mathbf{r}'_i|,\qquad i\in\{0,1,2\},
  \label{eq:normals-match}
\end{equation}
which a translated wall inherits unchanged, together with the speed $|\mathbf{r}'_i|$. Writing $\mathbf{c}$ for the junction of $\Gamma_1$ and $\Gamma_2$, the four corners of the cell are  $\mathbf{c}$, $\mathbf{c}+\aaa_1$,  $\mathbf{c}+\aaa_2$,  and $\mathbf{c}+\aaa_1+\aaa_2$. The geometry is sketched in Figure~\ref{fig:geometry}.

\begin{figure}[tb]
\centering
\includegraphics{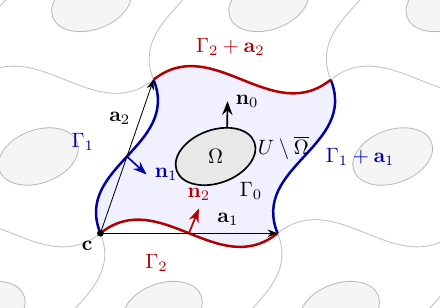}
\caption{Primitive cell $U$ (shaded) and parts of the neighbouring cells.  The lattice vectors $\aaa_1,\aaa_2$
start at the corner $\mathbf{c}$, and the inclusion $\Omega$ has boundary $\Gamma_0$ with outward normal
$\nn_0$.  The walls $\Gamma_1,\Gamma_1+\aaa_1$ (blue) and $\Gamma_2,\Gamma_2+\aaa_2$ (red) form the
translation pairs of \eqref{eq:walls}, with normals $\nn_1$ and $\nn_2$ pointing in the directions of
$\aaa_1$ and $\aaa_2$.  The walls may be curved, so that $\partial U$ does not intersect
$\overline{\Omega}$.}
\label{fig:geometry}
\end{figure}

Using the time convention $\mathrm{e}^{-i\omega t}$ and denoting the wavenumbers by
\begin{equation}
  k_1=\omega\sqrt{\varepsilon_1\mu_1} \;\text{ in } D_1,
  \qquad
  k_2=\omega\sqrt{\varepsilon_2\mu_2} \;\text{ in } D_2,
  \qquad \varepsilon_1,\varepsilon_2,\mu_1,\mu_2>0,
  \label{eq:wavenumbers}
\end{equation}
the field of interest $u:\mathbb{R}^2\to\mathbb{C}$---the out-of-plane component $E_z$ of the
electric field in E-polarization, $H_z$ of the magnetic field in H-polarization---is classical on each side
of the interface, i.e.,
\begin{equation}
  u|_{D_1}\in C^2(D_1)\cap C^1(\overline{D_1})
  \qquad\text{and}\qquad
  u|_{D_2}\in C^2(D_2)\cap C^1(\overline{D_2}).
  \label{eq:regularity}
\end{equation}
For positive physical frequencies $\omega>0$, both $k_1$ and $k_2$ are positive.
The physical band structure includes $\omega=0$, but the Hankel-kernel formulation
below is used only for $\omega>0$ and its analytic continuation to
$\mathbb C\setminus(-\infty,0]$ (\S\ref{sec:fredholm}).  We do not formulate the
zero-frequency limit of these layer potentials here.

In the sequel we let $\nn$ denote the unit normal on $\partial D_2$ directed out of $D_2$. It is
$\Lambda$-periodic by construction and, on $\Gamma_0$, agrees with the parametric normal
$\nn_0$ of \eqref{eq:normals-match}.  

For a curve $\Gamma$ carrying a chosen unit normal
$\nn$ we define the Dirichlet and Neumann traces of $u$ on $\Gamma$ by
\begin{equation}
  \big(\gamma^\pm_{D,\Gamma}u\big)(\rr):=\lim_{\rho\to0^+}u\big(\rr\pm\rho\nn(\rr)\big),
  \quad
  \big(\gamma^\pm_{N,\Gamma}u\big)(\rr):=\lim_{\rho\to0^+}\nabla u\big(\rr\pm\rho\nn(\rr)\big)\cdot\nn(\rr),
  \quad \rr\in\Gamma.
  \label{eq:traces}
\end{equation}
The limits exist by \eqref{eq:regularity}, and on $\partial D_2$ the superscript ``$+$'' selects the limit taken from $D_1$ and ``$-$'' the limit taken from $D_2$.
The same definition is used on the open wall arcs $\Gamma_\ell+\vv$, $\ell\in\{1,2\}$, with
$\nn=\nn_\ell$ and ``$+$'' the side into which $\nn_\ell$ points. 

For a field that is smooth across the wall, such as a Bloch mode, the two limits agree.
On the curves $\Gamma_\ell$, $\ell=0,1,2$, we always take $\nn=\nn_\ell$ and use the
parametric traces
\begin{equation}
  \gamma^\pm_{D,\ell}u:=\big(\gamma^\pm_{D,\Gamma_\ell}u\big)\circ\mathbf{r}_\ell,
  \qquad
  \gamma^\pm_{N,\ell}u:=\big(\gamma^\pm_{N,\Gamma_\ell}u\big)\circ\mathbf{r}_\ell,
  \label{eq:param-traces}
\end{equation}
which are functions on $I_\ell$. Here the superscript $\pm$ is dropped whenever the two one-sided traces agree.  On a translated curve $\Gamma_\ell+\vv$ we write likewise $\gamma^\pm_{D,\ell}[\vv]u$ and $\gamma^\pm_{N,\ell}[\vv]u$ for the traces taken with the normal $\nn_\ell$ and composed with $\mathbf{r}_\ell+\vv$.  The two traces are collected into the Cauchy-data pair $\bgamma^\pm_\ell u:=(\gamma^\pm_{D,\ell}u,\gamma^\pm_{N,\ell}u)$, and likewise $\bgamma^\pm_\ell[\vv]u$.

With this notation the sought field $u:\mathbb R^2\to\mathbb C$ satisfies
\begin{subequations}\label{eq:bvp}
\begin{align}
  \Delta u+k_1^2u=0 \;\text{ in }D_1,
  &\qquad
  \Delta u+k_2^2u=0 \;\text{ in }D_2,
  \label{eq:bvp-helmholtz}\\
  \gamma^+_{D,\partial D_2}u=\gamma^-_{D,\partial D_2}u,
  &\qquad
  \gamma^+_{N,\partial D_2}u=\eta\,\gamma^-_{N,\partial D_2}u,
  \label{eq:bvp-transmission}\\
  u(\rr+\aaa_1)=\zeta_1 u(\rr),&\qquad u(\rr+\aaa_2)=\zeta_2 u(\rr),
  \qquad \rr\in\mathbb{R}^2,
  \label{eq:bvp-bloch}
\end{align}
\end{subequations}
with $\eta:=\mu_1/\mu_2$ (E-polarization) or $\varepsilon_1/\varepsilon_2$ (H-polarization).  The two Bloch phases in~\eqref{eq:bvp-bloch} are
\begin{equation}
\zeta_1:=\mathrm{e}^{i\kk\cdot\aaa_1}\qquad\text{and}\qquad \zeta_2:=\mathrm{e}^{i\kk\cdot\aaa_2},
  \label{eq:zeta-def}
\end{equation}
and we write $\zeta:=(\zeta_1,\zeta_2)$ for the pair.  Since $\zeta_1^m\zeta_2^n=\mathrm{e}^{i\kk\cdot\vv_{mn}}$,
\eqref{eq:bvp-bloch} reads $u(\rr+\vv)=\mathrm{e}^{i\kk\cdot\vv}u(\rr)$ for $\vv\in\Lambda$.  
Note that, by \eqref{eq:bvp-bloch}, it suffices to impose \eqref{eq:bvp-transmission} on the single
curve $\Gamma_0$, and differentiating \eqref{eq:bvp-bloch} gives
the same relations for $\nabla u$, so they need not be imposed separately.

Problem \eqref{eq:bvp} is the standard Bloch eigenvalue problem for a two-dimensional photonic crystal~\cite{Kuc2001,JJWM2008}.  Note that~\eqref{eq:bvp} is homogeneous, so $u\equiv0$ always solves it. What is sought is then not the field for prescribed parameters, but the parameters admitting a nontrivial one. In detail:
\begin{definition}[Bloch eigenvalue problem]
\label{def:bloch}
The lattice $\Lambda$, the inclusion $\Omega$ and the material constants
$\varepsilon_s,\mu_s$ being fixed, call a pair
$(\omega,\zeta)\in[0,\infty)\times\mathbb{T}^2$, $\mathbb{T}:=\{z\in\mathbb{C}:|z|=1\}$,
a \emph{Bloch eigenvalue},
and any corresponding $u\ne0$ a \emph{Bloch mode}, if \eqref{eq:bvp} admits a solution
$u\ne0$ of the regularity \eqref{eq:regularity}.  The set of all such pairs,
\begin{equation}
  \mathcal{B}:=\big\{(\omega,\zeta)\in[0,\infty)\times\mathbb{T}^2\;:\;\eqref{eq:bvp}\ \text{has a solution } u\ne0\big\},
  \label{eq:blochvariety}
\end{equation}
is the \emph{physical dispersion relation} of the crystal, and the Bloch modes at
a given $(\omega,\zeta)\in\mathcal{B}$ span its eigenspace there.
\end{definition}

Since \eqref{eq:bvp} depends on $\omega$ only through $\omega^2$, we take $\omega\ge0$.  For $\zeta\in\mathbb T^2$ and positive material constants, the energy identity over the cell (a Rayleigh quotient) shows that $\omega^2$ is real and nonnegative~\cite{Kuc2001,JJWM2008}, so no Bloch eigenvalue lies off the real axis. 

By \eqref{eq:zeta-def} the constraint $\zeta\in\mathbb{T}^2$ is exactly the statement that $\kk$ is real, and since~$\zeta$ depends on $\kk$ only modulo $\Lambda^*$ it
suffices to let $\kk$ range over the first Brillouin zone~\cite{JJWM2008}.  For $\varepsilon_s,\mu_s>0$ the admissible frequencies at a given $\kk$ then form a discrete sequence
\begin{equation}
  0\le\omega_1(\kk)\le\omega_2(\kk)\le\cdots\longrightarrow\infty ,
  \label{eq:bands}
\end{equation}
the \emph{bands}, and $\mathcal{B}$ is their graph over the Brillouin zone, i.e., the band structure.  The discreteness of the spectrum at fixed $\kk$ is classical, and we refer to \cite{Kuc1993} for the general theory and to \cite{Kuc2001} for the photonic-crystal case, where the band structure is identified with the spectrum of that family.  

What follows is aimed at recasting~\eqref{eq:blochvariety}, which is a statement about a~\eqref{eq:bvp}, as a condition that a certain (block) boundary integral operator have nontrivial kernel.

\begin{remark}
We assume smooth curves and classical solutions for simplicity, and because the high-order quadratures of \S\ref{sec:discretization} require smooth boundaries.  The layer potentials and boundary integral operators used below extend to Lipschitz curves in the standard Sobolev-space setting~\cite{Costabel1988,McLean2000}, but the compactness and regularity arguments of \S\ref{sec:fredholm} rely on smoothness %
and different arguments would be needed.
\end{remark}

\section{Layer potentials and Green's representations}
\label{sec:layer-rep}

\subsection{Layer potentials, integral operators, and translation}
\label{sec:operators}
Throughout, $s\in\{1,2\}$ indexes the wavenumber $k_s$ of \eqref{eq:wavenumbers}, and $\ell,i\in\{0,1,2\}$ index the curves $\Gamma_0,\Gamma_1,\Gamma_2$ of \S\ref{sec:problem}, with $\ell$ labelling the target curve and $i$ the source. Since quasi-periodicity is imposed through the parametrizations \eqref{eq:wall-param}, we write all densities and operators in parametric form. A density on $\Gamma_i$ is a function $\phi:I_i\to\mathbb{C}$, and a trace on $\Gamma_\ell$ is composed with $\mathbf{r}_\ell$, so that it is again a function on $I_\ell$.  

The Helmholtz single- and double-layer potentials over $\Gamma_i$ at wavenumber $k_s$~\cite{CK1983} are given by
\begin{subequations}\label{eq:potentials}
\begin{align}
  (\Spot^i_s\phi)(\rr)
    &= \frac{i}{4}\int_{I_i}H^{(1)}_0\big(k_s|\rr-\mathbf{r}_i(\tau)|\big)\,
       \phi(\tau)\,|\mathbf{r}'_i(\tau)|\,\mathrm{d}\tau\quad\text{and}\\
  (\Dpot^i_s\phi)(\rr)
    &= \frac{ik_s}{4}\int_{I_i}H^{(1)}_1\big(k_s|\rr-\mathbf{r}_i(\tau)|\big)\,
       \frac{\big(\rr-\mathbf{r}_i(\tau)\big)\cdot\nn_i(\tau)}{|\rr-\mathbf{r}_i(\tau)|}\,
       \phi(\tau)\,|\mathbf{r}'_i(\tau)|\,\mathrm{d}\tau,
\end{align}
\end{subequations}
for $\rr\in\mathbb{R}^2\setminus\Gamma$, and the four boundary integral operators with
target $\Gamma_\ell$ and source $\Gamma_i$ are
\begin{equation}
\begin{aligned}
  (\mathsf{A}^{\ell,i}_s\phi)(t) &= \int_{I_i}Q^{\ell,i}_{A,s}(t,\tau)\,\phi(\tau)\,|\mathbf{r}'_i(\tau)|\,\mathrm{d}\tau,
  \qquad \mathsf{A}\in\{\Vop,\Kop,\Ktop\},\\
  (\Wop^{\ell,i}_s\phi)(t) &= \mathrm{p.f.}\!\int_{I_i}Q^{\ell,i}_{W,s}(t,\tau)\,\phi(\tau)\,|\mathbf{r}'_i(\tau)|\,\mathrm{d}\tau,
\end{aligned}
  \label{eq:param-operators}
\end{equation}
for $t\in I_\ell$. These are, respectively, the single-layer, double-layer, adjoint double-layer and
hypersingular operators, $Q^{\ell,i}_{A,s}$ being the kernel belonging to $\mathsf{A}$ and ``$\mathrm{p.f.}$'' denoting that the integral is to be understood in the sense of the Hadamard finite part.

The classical jump relations \cite{CK1983} become, in this notation,
\begin{equation}
\begin{aligned}
  \gamma^\pm_{D,\ell}\Spot^i_s\phi &= \Vop^{\ell,i}_s\phi,
  &\quad
  \gamma^\pm_{N,\ell}\Spot^i_s\phi &= \mp\delta_{i,\ell}\frac{\phi}{2}+\Ktop^{\ell,i}_s\phi,\\
  \gamma^\pm_{N,\ell}\Dpot^i_s\phi &= \Wop^{\ell,i}_s\phi,
  &\quad
  \gamma^\pm_{D,\ell}\Dpot^i_s\phi &= \pm\delta_{i,\ell}\frac{\phi}{2}+\Kop^{\ell,i}_s\phi.
\end{aligned}
  \label{eq:jumps}
\end{equation}
On the open wall arcs the relations hold at interior points of the arc, for densities smooth up to its endpoints.  At the endpoints themselves, however, the potentials may be singular.  Writing
$\Rvec_{\ell,i}:=\mathbf{r}_\ell(t)-\mathbf{r}_i(\tau)$ and $R_{\ell,i}:=|\Rvec_{\ell,i}|$,
the kernels of the operators in  \eqref{eq:param-operators} are 
\begin{subequations}\label{eq:kernels}
\begin{align}
  Q^{\ell,i}_{V,s}(t,\tau) &:= \frac{i}{4}H^{(1)}_0(k_sR_{\ell,i}),
  \label{eq:kernel-V}\\
  Q^{\ell,i}_{K,s}(t,\tau) &:= \frac{ik_s}{4}H^{(1)}_1(k_sR_{\ell,i})\,
    \frac{\Rvec_{\ell,i}\cdot\nn_i(\tau)}{R_{\ell,i}},
  \label{eq:kernel-K}\\
  Q^{\ell,i}_{\widetilde{K},s}(t,\tau) &:= -\frac{ik_s}{4}H^{(1)}_1(k_sR_{\ell,i})\,
    \frac{\Rvec_{\ell,i}\cdot\nn_\ell(t)}{R_{\ell,i}}, \quad \text{and}
  \label{eq:kernel-Kt}\\
  Q^{\ell,i}_{W,s}(t,\tau) &:= \frac{ik_s}{4}\Bigg(
    \frac{H^{(1)}_1(k_sR_{\ell,i})}{R_{\ell,i}}\,\nn_\ell(t)\cdot\nn_i(\tau)
    \nonumber\\
  &\hspace{2.0em}
    +\Big\{k_sR_{\ell,i}H^{(1)}_0(k_sR_{\ell,i})-2H^{(1)}_1(k_sR_{\ell,i})\Big\}
    \nonumber\\
  &\hspace{6.0em}
    \times\frac{\big(\Rvec_{\ell,i}\cdot\nn_i(\tau)\big)\big(\Rvec_{\ell,i}\cdot\nn_\ell(t)\big)}{R_{\ell,i}^{3}}
    \Bigg).
  \label{eq:kernel-W}
\end{align}
\end{subequations}
All four kernels are $C^\infty$ up to the parameter endpoints when the closures of the target and source curves are disjoint.  Distinct wall arcs
can have singular interactions at shared endpoints; these interactions are removed by the stack construction below.  In the self-interaction case $\ell=i$, $Q^{\ell,i}_{V,s}$ is logarithmically singular, $Q^{\ell,i}_{K,s}$ and $Q^{\ell,i}_{\widetilde{K},s}$ extend continuously, and only
$Q^{\ell,i}_{W,s}$ is hypersingular; in our formulation this singularity cancels, as in the M\"uller formulation of~\cite{KR1978}.%

The field representations of \S\ref{sec:representation} integrate over lattice translates of the
curves of \S\ref{sec:problem}.  Rather than labelling the translated curves, we track the translation
through an additional argument: for $\vv\in\mathbb{R}^2$, $\Spot^i_s[\vv]$, $\Dpot^i_s[\vv]$ and
$\mathsf{A}^{\ell,i}_s[\vv]$, $\mathsf{A}\in\{\Vop,\Kop,\Ktop,\Wop\}$, denote the potentials
\eqref{eq:potentials} and operators \eqref{eq:param-operators} with source curve $\Gamma_i+\vv$,
parametrized by $\mathbf{r}_i+\vv$.  Superscripts thus always refer to the untranslated curves, and
$[\mathbf{0}]$ is omitted.  Since translation changes neither $\nn_i$ nor $|\mathbf{r}'_i|$, the kernel
of $\mathsf{A}^{\ell,i}_s[\vv]$ is that of \eqref{eq:kernels} with $\Rvec_{\ell,i}$ replaced by
$\Rvec_{\ell,i}-\vv$, and
\begin{equation}
  (\Spot^i_s[\vv]\phi)(\rr)=(\Spot^i_s\phi)(\rr-\vv),
  \qquad
  (\Dpot^i_s[\vv]\phi)(\rr)=(\Dpot^i_s\phi)(\rr-\vv).
  \label{eq:shift}
\end{equation}

\paragraph{Conventions} To keep the notation manageable, we adopt two conventions.  First, every
operator in the formulation is taken at the exterior wavenumber $k_1$, except the self-interaction
of $\Gamma_0$ at $k_2$, so we drop the subscript $s=1$,
\begin{equation*}
  \Spot^i[\vv]:=\Spot^i_1[\vv],\qquad \Dpot^i[\vv]:=\Dpot^i_1[\vv],\qquad
  \mathsf{A}^{\ell,i}[\vv]:=\mathsf{A}^{\ell,i}_1[\vv],
\end{equation*}
and keep the subscript $2$ where it occurs.  Second, all potentials act on Cauchy data: for a pair
$\Phi=(\phi,\psi)$ of densities on $I_i$ we define the Green layer potential and its
$2\times2$ trace operator
\begin{equation}
  \Gpot^i[\vv]\Phi:=\Dpot^i[\vv]\phi-\Spot^i[\vv]\psi,
  \qquad
  \Cop^{\ell,i}[\vv]:=
  \begin{bmatrix}
    \Kop^{\ell,i}[\vv] & -\Vop^{\ell,i}[\vv]\\[2pt]
    \Wop^{\ell,i}[\vv] & -\Ktop^{\ell,i}[\vv]
  \end{bmatrix},
  \label{eq:green-layer}
\end{equation}
and likewise $\Gpot^0_2$ and $\Cop^{0,0}_2$ at $k_2$.  Green's representation formula is then a sum
of terms $\Gpot^i[\vv]\Phi$, and \eqref{eq:jumps} becomes
\begin{equation}
  \bgamma^\pm_\ell\,\Gpot^i\Phi=\pm\tfrac12\,\delta_{i,\ell}\,\Phi+\Cop^{\ell,i}\Phi .
  \label{eq:jumps-pair}
\end{equation}

\begin{lemma}%
\label{lem:translation}
For all $\ell,i\in\{0,1,2\}$, $\vv,\vv'\in\mathbb{R}^2$ and $\mathsf{A}\in\{\Vop,\Kop,\Ktop,\Wop\}$,
the operator with target $\Gamma_\ell+\vv'$ (parametrized by $\mathbf{r}_\ell+\vv'$) and source
$\Gamma_i+\vv$ is $\mathsf{A}^{\ell,i}[\vv-\vv']$, wherever the latter is defined.  For lattice translations
$\vv,\vv'\in\Lambda$, at interior points of the target arc,
\begin{equation}
  \bgamma^\pm_\ell[\vv']\,\Gpot^i[\vv]\Phi
  =\pm\tfrac12\,\delta_{i,\ell}\,\delta_{\vv,\vv'}\,\Phi+\Cop^{\ell,i}[\vv-\vv']\Phi .
  \label{eq:translation}
\end{equation}
\end{lemma}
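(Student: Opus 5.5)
The first claim is a change of variables in the kernels \eqref{eq:kernels}. The operator with target $\Gamma_\ell+\vv'$ and source $\Gamma_i+\vv$ has the same integrand as \eqref{eq:param-operators}, evaluated with target point $\mathbf{r}_\ell(t)+\vv'$ and source point $\mathbf{r}_i(\tau)+\vv$. Since translation changes neither the normals $\nn_\ell(t)$, $\nn_i(\tau)$ nor the speed $|\mathbf{r}_i'(\tau)|$, the only geometric quantity that enters is the difference vector
\[
(\mathbf{r}_\ell(t)+\vv')-(\mathbf{r}_i(\tau)+\vv)=\Rvec_{\ell,i}-(\vv-\vv').
\]
By the convention stated just before \eqref{eq:shift}, this is precisely the kernel of $\mathsf{A}^{\ell,i}[\vv-\vv']$. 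For $\Vop,\Kop,\Ktop$ this is immediate. For $\Wop$ the same substitution applies inside the finite-part integral: it is a pointwise identity of kernels, and the Hadamard finite part is defined through the same parametrization, so it is preserved. The phrase "wherever defined" covers the cases in which the translated curves touch, where the integrals may be singular.

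For \eqref{eq:translation}, I would first use \eqref{eq:shift} to write $\Gpot^i[\vv]\Phi(\rr)=\Gpot^i[\vv-\vv']\Phi(\rr-\vv')$. The traces $\bgamma^\pm_\ell[\vv']$ are taken at the points $\mathbf{r}_\ell(t)+\vv'\pm\rho\,\nn_\ell(t)$. Subtracting $\vv'$ turns them into $\bgamma^\pm_\ell$ applied to $\Gpot^i[\vv-\vv']\Phi$, because the gradient commutes with translation and the normal is unchanged. This reduces the claim to the case $\vv'=\mathbf 0$ with translation $\mathbf{w}:=\vv-\vv'\in\Lambda$.

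The reduced case splits into two. If $i=\ell$ and $\mathbf w=\mathbf 0$, the identity is exactly \eqref{eq:jumps-pair}. Otherwise the source curve $\Gamma_i+\mathbf w$ differs from the target $\Gamma_\ell$, and the claim is that the potential is smooth across interior points of $\Gamma_\ell$, so both one-sided traces equal the direct evaluation of the kernel, namely $\Cop^{\ell,i}[\mathbf w]\Phi$. This is the step needing care: one must check that an interior point of the target arc does not lie on $\Gamma_i+\mathbf w$ whenever $(i,\mathbf w)\ne(\ell,\mathbf 0)$.

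I would verify this from the geometry of \S\ref{sec:problem}. If $i=0$ or $\ell=0$, the condition $\overline\Omega\cap\partial U=\emptyset$ and the disjointness of the lattice translates $\Omega+\vv$ give disjoint closures. If $i,\ell\in\{1,2\}$, the four arcs of \eqref{eq:walls} are distinct and meet only at endpoints, and the translates of $\overline U$ tile the plane with disjoint interiors. Consequently, distinct lattice translates of wall arcs meet at most at corners $\mathbf c+\Lambda$, or coincide as whole arcs. Coincidence is the delicate case, for example $\Gamma_1+\aaa_1$ versus $\Gamma_1$ itself shifted. The curve $\Gamma_\ell+\mathbf w$ equals $\Gamma_i$ only if $i=\ell$ and $\mathbf w=\mathbf 0$, since the tiling would otherwise force two distinct arcs of \eqref{eq:walls} to coincide. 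So away from endpoints the source is at positive distance from the target point, the kernels are smooth there, no jump occurs, and $\delta_{i,\ell}\delta_{\vv,\vv'}$ records exactly the one jump case.
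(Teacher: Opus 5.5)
Your proposal is correct and follows essentially the paper's proof: translating target and source only replaces $\Rvec_{\ell,i}$ by $\Rvec_{\ell,i}-(\vv-\vv')$ and leaves normals and speeds unchanged, and the tiling guarantees that no source arc other than the target itself passes through an interior point of the target arc, so \eqref{eq:translation} reduces to \eqref{eq:jumps-pair}. Your explicit reduction to $\vv'=\mathbf 0$ via \eqref{eq:shift} and your case-by-case geometric check spell out what the paper states in one line; the tiling claim you assert (no partial overlaps or interior tangencies of wall translates), which the paper also only asserts, holds because a small disc about an interior point of $\Gamma_\ell$ meets only the cells $U$ and $U-\aaa_\ell$, so any arc $\Gamma_i+\mathbf w$ through that point, which bounds $U+\mathbf w$ and $U+\mathbf w-\aaa_i$, must have $i=\ell$ and $\mathbf w=\mathbf 0$.
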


\begin{proof}
Translating the target by $\vv'$ and the source by $\vv$ replaces $\mathbf{r}_\ell(t)-\mathbf{r}_i(\tau)$ by $\mathbf{r}_\ell(t)-\mathbf{r}_i(\tau)-(\vv-\vv')$ and
leaves normals and speeds unchanged.  For lattice translates in the assumed tiling, distinct source and target arcs have disjoint interiors and they coincide only when $i=\ell$ and $\vv=\vv'$.  Thus \eqref{eq:translation} follows from~\eqref{eq:jumps-pair}. 
\end{proof}

\subsection{Three Green's representations}
\label{sec:representation}

Let $u$ be a Bloch mode, i.e., a nontrivial solution of \eqref{eq:bvp}.  We define its Cauchy data on
the three curves by
\begin{equation}
  \Phi_\ell=(\phi_\ell,\psi_\ell):=\bgamma^-_\ell u\quad\text{on } I_\ell,\qquad \ell\in\{0,1,2\}.
  \label{eq:densities}
\end{equation}

By \eqref{eq:bvp-transmission}
the exterior data on $\Gamma_0$ are $\Neta\Phi_0$, with $\Neta:=\operatorname{diag}(1,\eta)$, and by \eqref{eq:bvp-bloch}
\begin{equation}
  \bgamma^\pm_i[\vv]u=\mathrm{e}^{i\kk\cdot\vv}\,\bgamma^\pm_iu,\qquad \vv\in\Lambda,
  \label{eq:extension}
\end{equation}
which is the density-transfer rule of \cite[\S8.1]{SFFP}.

Let $\mathcal{P}$ be the interior of $\bigcup_{\vv\in F}(\overline{U}+\vv)$ for a finite set
$F\subset\Lambda$, and $\mathcal{P}^\circ:=\mathcal{P}\setminus\overline{D_2}$ the same region with the inclusions removed.  Green's representation formula~\cite{CK1983} in $\mathcal{P}^\circ$ with the free-space kernel,
\begin{equation}
  G_k(\rr,\rr'):=\tfrac{i}{4}H^{(1)}_0(k|\rr-\rr'|),\qquad \rr\ne\rr',
  \label{eq:greenfn}
\end{equation}
yields, with $\bm{\nu}$ the unit normal directed out of $\mathcal{P}^\circ$,
\begin{equation}\label{eq:Green_formula}
  \int_{\partial\mathcal{P}^\circ}\Big\{G_{k_1}(\rr,\rr')\,\frac{\partial u}{\partial\bm{\nu}}(\rr')
   -u(\rr')\,\frac{\partial G_{k_1}(\rr,\rr')}{\partial\bm{\nu}(\rr')}\Big\}\,\mathrm{d}s(\rr')
  =\begin{cases} u(\rr), & \rr\in\mathcal{P}^\circ,\\ 0, & \rr\notin\overline{\mathcal{P}^\circ}.\end{cases}
\end{equation}
The boundary $\partial\mathcal{P}^\circ$ consists of the inclusion boundaries $\Gamma_0+\vv$,
$\vv\in F$, and of the wall arcs $\Gamma_1+\vv$ and $\Gamma_2+\vv$ that form $\partial\mathcal{P}$.  On each
such arc $\Gamma_i+\vv$ we express the integrand of \eqref{eq:Green_formula} through the fixed normal
$\nn_i$ of \S\ref{sec:problem} instead of $\bm{\nu}$.  Since $\bm{\nu}=-\sigma\nn_i$ with
\begin{equation}
  \sigma:=\begin{cases}
    +1, & \nn_i \text{ points into } \mathcal{P}^\circ,\\
    -1, & \nn_i \text{ points out of } \mathcal{P}^\circ,
  \end{cases}
  \label{eq:orientation-sign}
\end{equation}
the arc contributes $\sigma\,\Gpot^i[\vv]\Psi$ to the left-hand side of \eqref{eq:Green_formula}, where
$\Psi$ is the Cauchy data of $u$ on the arc, taken from $\mathcal{P}^\circ$ with respect to $\nn_i$.  By \eqref{eq:densities} and \eqref{eq:extension},
$\Psi=\mathrm{e}^{i\kk\cdot\vv}\Phi_i$ on a wall arc and $\Psi=\mathrm{e}^{i\kk\cdot\vv}\Neta\Phi_0$ on an inclusion boundary.  

Since $\nn_0$ points out of $\Omega$, and $\nn_j$ in the sense of $+\aaa_j$, $j\in\{1,2\}$ (Figure~\ref{fig:geometry}), every inclusion boundary has $\sigma=+1$, and a wall arc $\Gamma_j+\vv$ has $\sigma=+1$ if $\mathcal{P}$ lies on its $+\aaa_j$ side and $\sigma=-1$ otherwise.  

The left-hand side of~\eqref{eq:Green_formula} therefore depends on $u$ only through the density
vector $\bphi:=(\Phi_0,\Phi_1,\Phi_2)$.  For any $\bphi$ in
\begin{equation}
  \mathbf{X}:=\prod_{\ell=0}^{2}\Big(C^{\infty}(\overline{I_\ell})\times C^{\infty}(\overline{I_\ell})\Big),
  \label{eq:Xspace}
\end{equation}
with the components on $I_0$ $2\pi$-periodic, we define the representation on $\mathcal P$,
\begin{equation}
  u_{\mathcal{P}}[\bphi]:=\sum_{\vv\in F}\mathrm{e}^{i\kk\cdot\vv}\,\Gpot^0[\vv]\Neta\Phi_0
  +\sum_{j=1}^{2}\;\sum_{\Gamma_j+\vv\subset\partial\mathcal{P}}\sigma\,\mathrm{e}^{i\kk\cdot\vv}\,\Gpot^j[\vv]\Phi_j,
  \label{eq:rep-P}
\end{equation}
with $\Gpot^i[\vv]$ the Green layer potentials \eqref{eq:green-layer} and $\sigma$ as in \eqref{eq:orientation-sign}.  It is defined off the arcs, and its one-sided traces on
the open arcs exist and satisfy \eqref{eq:jumps}~\cite[\S2]{CK1983}.  For a Bloch mode with Cauchy data
$\bphi$, $u_{\mathcal P}[\bphi]$ is the left-hand side of \eqref{eq:Green_formula}.

In what follows, $j$ indexes a lattice direction, and with it the translation pair of walls
$\Gamma_j,\Gamma_j+\aaa_j$, and $j'$ indexes the other direction,
\begin{equation}
  j\in\{1,2\},\qquad j':=3-j.
  \label{eq:jprime}
\end{equation}

Three different domains $\mathcal P$ are used in our direct BIE formulation: the unit cell $U$ itself and, for each wall
$\Gamma_j$, the stack of three cells along it,
\begin{equation}
  S_j:=\operatorname{int}\bigcup_{n=-1}^{1}\big(\overline{U}+n\aaa_{j'}\big),
  \label{eq:stacks}
\end{equation}
so that $S_1$ is a ``column" and $S_2$ a ``row" (Figure~\ref{fig:stacks}).

\begin{figure}[tb]
\centering
\includegraphics{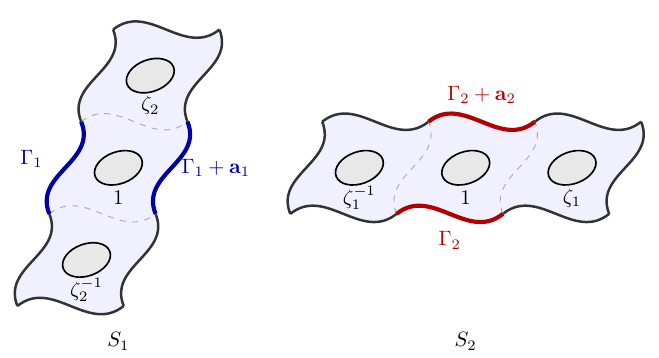}
\caption{The column $S_1$ and the row $S_2$ of \eqref{eq:stacks}, with the walls and inclusion of
Figure~\ref{fig:geometry} and the Bloch phase of each cell.  Black curves are the sources of the stack
representations \eqref{eq:rep-stack}, and blue and red curves are the targets of the wall equations for
$\Gamma_1$ and $\Gamma_2$.  Dashed walls lie inside the stack and are not sources.}
\label{fig:stacks}
\end{figure}

For $\mathcal{P}=U$, the wall arcs are $\Gamma_j$ ($\sigma=+1$) and $\Gamma_j+\aaa_j$ ($\sigma=-1$), and
\eqref{eq:rep-P} becomes the \emph{cell representation}
\begin{equation}
  u_U[\bphi]=\Gpot^0\Neta\Phi_0
  +\sum_{j=1}^{2}\big(\Gpot^j-\zeta_j\Gpot^j[\aaa_j]\big)\Phi_j ,
  \label{eq:rep-cell}
\end{equation}
and Green's representation formula in $\Omega$, i.e., \eqref{eq:Green_formula} with $\mathcal P^\circ$ replaced by $\Omega$
and $k_1$ by $k_2$, gives the \emph{interior representation}
\begin{equation}
  u_\Omega[\bphi]:=-\Gpot^0_2\Phi_0 .
  \label{eq:rep-interior}
\end{equation}
The pair \eqref{eq:rep-cell} in $U\setminus\overline{\Omega}$ and \eqref{eq:rep-interior} in
$\Omega$, extended by \eqref{eq:bvp-bloch}, is the representation we use to evaluate the Bloch
mode (see Lemma~\ref{lem:green-extinction}).

Next, the boundary of $S_j$ minus its inclusions consists of the arcs $\Gamma_j+n\aaa_{j'}$
($\sigma=+1$) and $\Gamma_j+\aaa_j+n\aaa_{j'}$ ($\sigma=-1$), $n=-1,0,1$, which continue the wall and its translate, the two arcs $\Gamma_{j'}-\aaa_{j'}$ ($\sigma=+1$) and $\Gamma_{j'}+2\aaa_{j'}$ ($\sigma=-1$) that close the stack, and the inclusion boundaries $\Gamma_0+n\aaa_{j'}$.  For $\mathcal{P}=S_j$, \eqref{eq:rep-P} therefore becomes the \emph{stack representation}
\begin{multline}
  u_{S_j}[\bphi]=\sum_{n=-1}^{1}\zeta_{j'}^{n}\Big(\Gpot^0[n\aaa_{j'}]\Neta\Phi_0
    +\big(\Gpot^j[n\aaa_{j'}]-\zeta_j\Gpot^j[\aaa_j+n\aaa_{j'}]\big)\Phi_j\Big)\\
  +\big(\zeta_{j'}^{-1}\Gpot^{j'}[-\aaa_{j'}]-\zeta_{j'}^{2}\Gpot^{j'}[2\aaa_{j'}]\big)\Phi_{j'} .
  \label{eq:rep-stack}
\end{multline}

The following lemma summarizes what the three representations give for a Bloch mode.
\begin{lemma}%
\label{lem:green-extinction}
Let $u$ be a Bloch mode, i.e., a nontrivial solution of \eqref{eq:bvp}, and $\bphi$ its Cauchy data
\eqref{eq:densities}.  Then, for $\mathcal P\in\{U,S_1,S_2\}$, the cell and stack representations
\eqref{eq:rep-cell} and \eqref{eq:rep-stack} satisfy
\begin{equation}
  u_{\mathcal P}[\bphi]=\begin{cases} u & \text{in } \mathcal P\setminus\overline{D_2},\\
  0 & \text{in } (\mathcal P\cap D_2)\cup(\mathbb R^2\setminus\overline{\mathcal P}),\end{cases}
  \label{eq:extinction-cell}
\end{equation}
and the interior representation \eqref{eq:rep-interior} satisfies
\begin{equation}
  u_\Omega[\bphi]=\begin{cases} u & \text{in } \Omega,\\ 0 & \text{in } \mathbb R^2\setminus\overline\Omega.\end{cases}
  \label{eq:extinction-interior}
\end{equation}
\end{lemma}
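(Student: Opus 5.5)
The plan is to apply Green's representation formula \eqref{eq:Green_formula} on the perforated domain $\mathcal P^\circ=\mathcal P\setminus\overline{D_2}$, and then to identify the boundary integral with $u_{\mathcal P}[\bphi]$ as in \eqref{eq:rep-P}. The preceding discussion already sets this up. The set $\partial\mathcal P^\circ$ is a finite union of the inclusion boundaries $\Gamma_0+\vv$, $\vv\in F$, and of the wall arcs $\Gamma_j+\vv$ forming $\partial\mathcal P$.

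On each arc, the Cauchy data of $u$ taken from $\mathcal P^\circ$ are determined as follows.
\begin{itemize}
\item On $\Gamma_0+\vv$ the data are the exterior data. By \eqref{eq:bvp-transmission} and \eqref{eq:extension} they equal $\mathrm{e}^{i\kk\cdot\vv}\Neta\Phi_0$.
\item On a wall arc $\Gamma_j+\vv$, $u$ is smooth across the wall, since $\overline\Omega\cap\partial U=\emptyset$. Hence both one-sided traces agree and equal $\mathrm{e}^{i\kk\cdot\vv}\Phi_j$.
\end{itemize}
The sign bookkeeping $\bm\nu=-\sigma\nn_i$ turns each arc integral into $\sigma\,\Gpot^i[\vv]\Psi$. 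This gives $u_{\mathcal P}[\bphi]$ exactly, and for $\mathcal P=U$ and $\mathcal P=S_j$ it reproduces \eqref{eq:rep-cell} and \eqref{eq:rep-stack}.

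The justification of Green's formula itself needs care, because $\mathcal P^\circ$ is only piecewise smooth: its outer boundary has corners at the lattice points $\mathbf c+\vv$. I would invoke the classical formula for $u\in C^2(\mathcal P^\circ)\cap C^1(\overline{\mathcal P^\circ})$, which holds on Lipschitz (piecewise $C^1$) domains. If one wants to avoid citing the Lipschitz version, one can excise small discs around the finitely many corners. On those discs the boundary contributions are $O(\epsilon\log\epsilon)$, because $u$ and $\nabla u$ are bounded near the walls (there $u$ is a smooth Helmholtz solution in $D_1$) and $G_{k_1}$, $\nabla G_{k_1}$ are integrable on circles. Letting $\epsilon\to0$ then gives the result.

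The value is $u$ for $\rr\in\mathcal P^\circ$ and $0$ for $\rr\notin\overline{\mathcal P^\circ}$. The complement of $\overline{\mathcal P^\circ}$ is precisely $(\mathcal P\cap D_2)\cup(\mathbb R^2\setminus\overline{\mathcal P})$, which proves \eqref{eq:extinction-cell}.

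For \eqref{eq:extinction-interior} I would apply Green's formula in $\Omega$ at wavenumber $k_2$, where $\partial\Omega=\Gamma_0$ is smooth. The outward normal of $\Omega$ is $\nn_0$, so $\bm\nu=+\nn_0$ and $\sigma=-1$. The interior Cauchy data are $\Phi_0=\bgamma^-_0u$ by \eqref{eq:densities}. Thus the integral equals $-\Gpot^0_2\Phi_0=u_\Omega[\bphi]$, which is $u$ in $\Omega$ and $0$ outside $\overline\Omega$.

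I expect the main obstacle to be the corner and endpoint behaviour. Individual wall potentials $\Gpot^j[\vv]\Phi_j$ may be singular at arc endpoints. The formula is nevertheless asserted only off $\overline{\mathcal P^\circ}$'s boundary, where every term is smooth, so the equality is pointwise there and the excision argument above suffices. A second point to check is the sign bookkeeping in \eqref{eq:rep-stack}, in particular the closing arcs $\Gamma_{j'}-\aaa_{j'}$ and $\Gamma_{j'}+2\aaa_{j'}$. To confirm this, I would verify that the outer boundary of $S_j$ is exactly the listed arcs, since interior walls shared by adjacent cells do not appear.
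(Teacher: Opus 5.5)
Your proposal is correct and follows the paper's argument, whose proof is just Green's formula \eqref{eq:Green_formula} in $\mathcal P\setminus\overline{D_2}$ at $k_1$ and in $\Omega$ at $k_2$; your identification of the Cauchy data on each arc, of the signs $\sigma$, and of the complement of $\overline{\mathcal P^\circ}$ spells out what the paper leaves implicit. One small quibble concerns your optional excision remark: cutting out discs around the corners creates new kinks where the circles meet the walls, so it does not avoid the divergence theorem on piecewise smooth domains (rounding the corners off inside the discs would), and since $\rr$ stays at positive distance from the corners those contributions are actually $O(\epsilon)$.
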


\begin{proof}
Both are Green's formula \eqref{eq:Green_formula}, in $\mathcal P\setminus\overline{D_2}$ at wavenumber $k_1$ and
in $\Omega$ at $k_2$, which applies by \eqref{eq:bvp} and \eqref{eq:regularity}.
\end{proof}

\section{Boundary integral formulation} \label{sec:bie}
Having established above three field representations, we now derive a BIE system by suitably combining their traces.

\subsection{Transmission rows}
Setting the exterior trace of \eqref{eq:rep-cell} equal to $\Neta\Phi_0$, and the interior trace of~\eqref{eq:rep-interior} equal to $\Phi_0$, we obtain, via the jump ralations~\eqref{eq:jumps-pair}, the identities
\begin{equation}
  \tfrac12\Neta\Phi_0=\Cop^{0,0}\Neta\Phi_0+\sum_{j=1}^{2}\Lbf^{0,j}\Phi_j
  \quad\text{and}\quad
  \tfrac12\Phi_0=-\Cop^{0,0}_2\Phi_0,
  \label{eq:traces-transmission}
\end{equation}
with the blocks
\begin{equation}
  \Lbf^{0,j}:=\Cop^{0,j}-\zeta_j\Cop^{0,j}[\aaa_j],\qquad j=1,2.
  \label{eq:blocks-transmission}
\end{equation}
Adding the two identities gives the M\"uller equations~\cite{KR1978}, which in our notations take the form
\begin{subequations}\label{eq:rows-transmission}
\begin{equation}
  \mathsf{E}_0\Phi_0=\sum_{i=0}^{2}\Lbf^{0,i}\Phi_i\qquad\text{on }I_0,
\end{equation}
where
\begin{equation}
  \mathsf{E}_0:=\tfrac12\big(\Id+\Neta\big)=\operatorname{diag}\!\Big(1,\tfrac{1+\eta}{2}\Big),
  \qquad
  \Lbf^{0,0}:=\Cop^{0,0}\Neta-\Cop^{0,0}_2 ,
\end{equation}
\end{subequations}
with $\Id$ the identity.

\subsection{Wall rows}
By Lemma~\ref{lem:green-extinction} and \eqref{eq:extension}, the traces of the stack representation
\eqref{eq:rep-stack} of a Bloch mode, taken from inside $S_j$, are
\begin{equation*}
  \bgamma^+_ju_{S_j}[\bphi]=\Phi_j \text{ on } \Gamma_j
  \qquad\text{and}\qquad
  \bgamma^-_j[\aaa_j]u_{S_j}[\bphi]=\zeta_j\Phi_j \text{ on } \Gamma_j+\aaa_j .
\end{equation*}
Each identity alone would give an equation for $\Phi_j$, but both arcs are source curves of
\eqref{eq:rep-stack}, so each trace involves the hypersingular self-interaction $\Wop^{j,j}$ and the
operators $\Cop^{j,j}[\pm\aaa_{j'}]$ of the collinear neighbouring arcs, whose kernels are singular at
the endpoints of $I_j$.  As in \cite[Eqs.~(41)--(42)]{SFFP}, these terms cancel when the second trace
is multiplied by $\zeta_j^{-1}$ and added to the first. In detail, we have:
\begin{proposition}%
\label{prop:wall-rows}
For $j\in\{1,2\}$, $j'$ as in \eqref{eq:jprime}, and $\bphi\in \mathbf{X}$,
\begin{equation}
  \bgamma^+_ju_{S_j}[\bphi]+\zeta_j^{-1}\bgamma^-_j[\aaa_j]u_{S_j}[\bphi]
  =\Phi_j+\sum_{i=0}^{2}\Lbf^{j,i}\Phi_i,
  \label{eq:wall-combination}
\end{equation}
with $u_{S_j}[\bphi]$ defined in~\eqref{eq:rep-stack} and the stack blocks given by 
\begin{equation}
\begin{aligned}
  \Lbf^{j,0}&:=\sum_{n=-1}^{1}\zeta_{j'}^{n}\Big(\Cop^{j,0}[n\aaa_{j'}]
     +\zeta_j^{-1}\Cop^{j,0}[-\aaa_j+n\aaa_{j'}]\Big)\Neta,\\
  \Lbf^{j,j}&:=\sum_{n=-1}^{1}\zeta_{j'}^{n}\Big(\zeta_j^{-1}\Cop^{j,j}[-\aaa_j+n\aaa_{j'}]
     -\zeta_j\Cop^{j,j}[\aaa_j+n\aaa_{j'}]\Big),\\
  \Lbf^{j,j'}&:=\zeta_{j'}^{-1}\Big(\Cop^{j,j'}[-\aaa_{j'}]+\zeta_j^{-1}\Cop^{j,j'}[-\aaa_j-\aaa_{j'}]\Big)\\
     &\qquad-\zeta_{j'}^{2}\Big(\Cop^{j,j'}[2\aaa_{j'}]+\zeta_j^{-1}\Cop^{j,j'}[-\aaa_j+2\aaa_{j'}]\Big).
\end{aligned}
  \label{eq:blocks-stack}
\end{equation}
In every integral operator term of \eqref{eq:blocks-stack} the source curve is disjoint from the target curve~$\Gamma_j$.
\end{proposition}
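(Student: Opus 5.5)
The proof is a term-by-term computation: apply Lemma~\ref{lem:translation} to every source term of the stack representation \eqref{eq:rep-stack}, with target $\Gamma_j$ (translation $\vv'=\mathbf 0$, side $+$) in the first trace and target $\Gamma_j+\aaa_j$ (translation $\vv'=\aaa_j$, side $-$) in the second. Each source $\Gpot^i[\vv]$ then contributes $\Cop^{j,i}[\vv]$ to the first trace and $\Cop^{j,i}[\vv-\aaa_j]$ to the second. It also contributes the jump $\pm\frac12\delta_{i,j}\delta_{\vv,\vv'}\Phi$, but only when the source arc coincides with the target arc. After multiplying the second trace by $\zeta_j^{-1}$ and adding, I will collect the terms by source curve index $i\in\{0,1,2\}$.

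\textbf{Jumps and self-interaction.} The sources $\Gpot^j[n\aaa_{j'}]$ and $-\zeta_j\Gpot^j[\aaa_j+n\aaa_{j'}]$, both with weight $\zeta_{j'}^n$, are the ones to track.
\begin{itemize}
\item For $n=0$, the source $\Gpot^j$ coincides with the first target. With $\sigma=+1$ and the $+$ trace it gives $\tfrac12\Phi_j+\Cop^{j,j}\Phi_j$.
\item Against the second target, the same source gives $\zeta_j^{-1}\Cop^{j,j}[-\aaa_j]\Phi_j$.
\item For $n=0$, the source $-\zeta_j\Gpot^j[\aaa_j]$ coincides with the second target. With the $-$ trace it gives $-\zeta_j(-\tfrac12\Phi_j+\Cop^{j,j})\Phi_j$. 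After multiplication by $\zeta_j^{-1}$ this is $\tfrac12\Phi_j-\Cop^{j,j}\Phi_j$.
\end{itemize}
The two jumps add to $\Phi_j$, and the two self-interactions $\pm\Cop^{j,j}$, including the hypersingular $\Wop^{j,j}$, cancel exactly. This cancellation is the heart of the statement.

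\textbf{Neighbouring collinear arcs.} The same pairing disposes of the endpoint-singular interactions. Consider $n=\pm1$. The term $\zeta_{j'}^{\pm1}\Cop^{j,j}[\pm\aaa_{j'}]$ arises from $\Gpot^j[\pm\aaa_{j'}]$ evaluated on $\Gamma_j$. It cancels against $\zeta_j^{-1}\cdot(-\zeta_j)\zeta_{j'}^{\pm1}\Cop^{j,j}[\aaa_j\pm\aaa_{j'}-\aaa_j]$, which comes from $-\zeta_j\Gpot^j[\aaa_j\pm\aaa_{j'}]$ evaluated on $\Gamma_j+\aaa_j$.

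\textbf{Remaining terms.} What survives for $i=j$ is $\zeta_{j'}^n\bigl(\zeta_j^{-1}\Cop^{j,j}[-\aaa_j+n\aaa_{j'}]-\zeta_j\Cop^{j,j}[\aaa_j+n\aaa_{j'}]\bigr)$, which is $\Lbf^{j,j}$. The inclusion sources and the two closing $\Gamma_{j'}$ arcs never coincide with a target, so they produce no jumps. Applying the translation rule to them gives exactly $\Lbf^{j,0}$ and $\Lbf^{j,j'}$ as displayed in \eqref{eq:blocks-stack}.

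\textbf{Disjointness claim.} This is the step I expect to need the most care, because translates of $\Gamma_{j'}$ touch $\Gamma_j$ at corners. I will check each term against the geometry of the tiling \eqref{eq:walls}.
\begin{itemize}
\item In the $\Lbf^{j,0}$ terms, the source is a translate $\Gamma_0+\vv$ of the inclusion boundary. Since $\overline\Omega\cap\partial U=\emptyset$, it is disjoint from every wall.
\item In the $\Lbf^{j,j}$ terms, the source is $\Gamma_j+\vv$ with $\vv\in\{\pm\aaa_j+n\aaa_{j'}\}$. These are parallel wall copies whose endpoints $\mathbf c+\vv$ and $\mathbf c+\aaa_{j'}+\vv$ lie at lattice points with nonzero $\aaa_j$-coordinate. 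I will argue that they are disjoint from $\overline{\Gamma_j}$, because distinct arcs of the tiling meet only at shared corners.
\item In the $\Lbf^{j,j'}$ terms, the sources are $\Gamma_{j'}$ translated by $-\aaa_{j'}$, $-\aaa_j-\aaa_{j'}$, $2\aaa_{j'}$, $-\aaa_j+2\aaa_{j'}$. Their endpoints are at $\mathbf c+\vv$ and $\mathbf c+\aaa_j+\vv$. None of these equals $\mathbf c$ or $\mathbf c+\aaa_{j'}$, the endpoints of $\Gamma_j$, so the closures are disjoint.
\end{itemize}
With disjoint closures, the kernels are smooth up to the endpoints, as noted after \eqref{eq:kernels}.
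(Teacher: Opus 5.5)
Your proposal is correct and follows the paper's proof essentially step for step. It uses the termwise application of Lemma~\ref{lem:translation} with $\vv'=\mathbf 0$ and $\vv'=\aaa_j$, the cancellation of $\Cop^{j,j}[n\aaa_{j'}]$ for every $n$ (with $n=0$ being the self-interaction), the two $n=0$ jumps adding to $\Phi_j$, and the endpoint comparison that rests on distinct edges of the tiling meeting only at shared endpoints; the only difference is that you treat the $n=0$ and $n=\pm1$ cancellations separately, where the paper handles them in a single line.
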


\begin{proof}
Fix $j\in\{1,2\}$ and $\bphi\in\mathbf X$.  By Lemma~\ref{lem:translation} and the definition of $\Gpot^i$ and $\Cop^{j,i}$ in~\eqref{eq:green-layer}, at interior points of
the target curve $\Gamma_j+\vv'$ we have 
\begin{equation*}
  \bgamma^\pm_j[\vv']\,\Gpot^i[\vv]\Phi
  =\pm\tfrac12\,\delta_{i,j}\,\delta_{\vv,\vv'}\,\Phi+\Cop^{j,i}[\vv-\vv']\Phi ,
\end{equation*}
since every source other than the target intersects it at most at its endpoints, as shown at the end of
the proof.  The two
traces $\bgamma_j^+$ and $\bgamma_j^-$ in \eqref{eq:wall-combination} have $\vv'=\mathbf 0$ and $\vv'=\aaa_j$, so, jumps aside, a
term $\alpha\,\Gpot^i[\vv]$ of \eqref{eq:rep-stack}, where $\alpha$ is its Bloch phase factor (e.g.,
$\alpha=-\zeta_j\zeta_{j'}^n$ for $\Gpot^j[\aaa_j+n\aaa_{j'}]$), contributes
$\alpha\,\big(\Cop^{j,i}[\vv]+\zeta_j^{-1}\Cop^{j,i}[\vv-\aaa_j]\big)$.  For $i=0$ and $i=j'$ this gives
$\Lbf^{j,0}\Phi_0$ and $\Lbf^{j,j'}\Phi_{j'}$ directly.  For $i=j$, the pair of terms with index $n$
gives
\begin{equation*}
  \zeta_{j'}^{n}\Big(\Cop^{j,j}[n\aaa_{j'}]+\zeta_j^{-1}\Cop^{j,j}[-\aaa_j+n\aaa_{j'}]
  -\zeta_j\Cop^{j,j}[\aaa_j+n\aaa_{j'}]-\Cop^{j,j}[n\aaa_{j'}]\Big),
\end{equation*}
in which $\Cop^{j,j}[n\aaa_{j'}]$ cancels, and the sum over $n$ is $\Lbf^{j,j}\Phi_j$.  The jumps
come from the wall terms with $n=0$: $+\frac12\Phi_j$ from $\Gpot^j$ on $\Gamma_j$ ($+$ side) and
$\zeta_j^{-1}(-\zeta_j)(-\frac12)\Phi_j=\frac12\Phi_j$ from $-\zeta_j\Gpot^j[\aaa_j]$ on
$\Gamma_j+\aaa_j$ ($-$ side).  This proves~\eqref{eq:wall-combination}.

It remains to prove that the source curve of every term of \eqref{eq:blocks-stack} is disjoint
from $\Gamma_j$.  Two facts suffice.  First, an inclusion boundary $\Gamma_0+\vv$ lies in the open cell
$U+\vv$, so it intersects no wall arc, which settles~$\Lbf^{j,0}$.  Second, wall arcs are edges of the
tiling $\overline U+\Lambda$, so two distinct wall arcs are disjoint unless they share an endpoint.  For
$\Lbf^{j,j}$ and $\Lbf^{j,j'}$ it therefore suffices to compare endpoints.  Write
$\vv=m\aaa_j+n\aaa_{j'}$.  The target $\Gamma_j$ has endpoints $\mathbf c$, the junction of $\Gamma_1$ and $\Gamma_2$, and
$\mathbf c+\aaa_{j'}$.
In $\Lbf^{j,j}$ the sources are $\Gamma_j+\vv$, with endpoints $\mathbf c+\vv$ and
$\mathbf c+\vv+\aaa_{j'}$, which coincide with an endpoint of $\Gamma_j$ only if $m=0$, whereas $m=\pm1$
in $\Lbf^{j,j}$.  In $\Lbf^{j,j'}$ the sources are $\Gamma_{j'}+\vv$, with endpoints $\mathbf c+\vv$ and
$\mathbf c+\vv+\aaa_j$, which coincide with an endpoint of $\Gamma_j$ only if $n\in\{0,1\}$, whereas
$n\in\{-1,2\}$ in $\Lbf^{j,j'}$.
\end{proof}

It follows from Proposition~\ref{prop:wall-rows} that imposing $\bgamma^+_ju_{S_j}+\zeta_j^{-1}\bgamma^-_j[\aaa_j]u_{S_j}=2\Phi_j$ we obtain the
wall equations
\begin{equation}
  \Phi_j=\sum_{i=0}^{2}\Lbf^{j,i}\Phi_i\qquad\text{on } I_j,\quad j\in\{1,2\}.
  \label{eq:rows-walls}
\end{equation}
By the last statement of Proposition~\ref{prop:wall-rows}, all operators in \eqref{eq:rows-walls} have
kernels smooth up to the endpoints of~$I_j$, so the wall equations are well defined there for every
$\bphi\in\mathbf X$, although the individual traces in \eqref{eq:wall-combination} are in general singular
at the endpoints.

\subsection{The assembled system}

Equations \eqref{eq:rows-transmission} and \eqref{eq:rows-walls} assemble into the following block system for the density vector $\bphi$ of a Bloch mode:
\begin{equation}
  \big(\Ebf-\Lbf\big)\bphi=0,
  \qquad
  \Ebf:=\operatorname{diag}\big(\mathsf{E}_0,\Id,\Id\big),
  \qquad
  \Lbf:=\big[\Lbf^{\ell,i}\big]_{\ell,i=0}^{2}.
  \label{eq:system}
\end{equation}
Here $\mathsf{E}_0$ is defined in \eqref{eq:rows-transmission}, and
the blocks $\Lbf^{0,i}$ are those of \eqref{eq:blocks-transmission} and $\Lbf^{j,i}$, $j\in\{1,2\}$, those of
\eqref{eq:blocks-stack}.  The only one in which target and source curve coincide is $\Lbf^{0,0}$, whose
entries are the four differences
\begin{equation}
  \Lbf^{0,0}=
  \begin{bmatrix}
    \Kop^{0,0}-\Kop^{0,0}_2 & -\big(\eta\Vop^{0,0}-\Vop^{0,0}_2\big)\\[2pt]
    \Wop^{0,0}-\Wop^{0,0}_2 & -\big(\eta\Ktop^{0,0}-\Ktop^{0,0}_2\big)
  \end{bmatrix};
  \label{eq:singular-blocks}
\end{equation}
all other blocks have kernels smooth up to the endpoints, since their source and target curves are
disjoint ($\Gamma_0$ lies in the open cell, and Proposition~\ref{prop:wall-rows}).  The entries of \eqref{eq:singular-blocks} are at most logarithmically singular, owing to the
M\"uller cancellation in $\Wop^{0,0}-\Wop^{0,0}_2$ (see the proof of Proposition~\ref{prop:fredholm}).

The system~\eqref{eq:system} thus has no corner or wall singularities, as a result of three exact
cancellations, namely the M\"uller cancellation on $\Gamma_0$, the cancellation of the wall
self-interaction on each translation pair as in \cite[Eq.~(42)]{SFFP}, and the cancellation of the
interactions with the collinear neighbouring arcs, which the stack representations achieve.

\subsection{Fredholm property}
\label{sec:fredholm}

We here show that $\Ebf-\Lbf$ is Fredholm of index zero and that its null vectors are smooth.  These properties underpin the numerical method of \S\ref{sec:numerics} in three ways.  First, the inverse of $\Ebf-\Lbf$ is meromorphic in $\omega$, with isolated poles at the frequencies where the operator is singular, which justifies approximating the resolvent by rational functions.  Second, each pole carries a nontrivial null vector, which \S\ref{sec:spurious} tests against the Bloch problem. Third, the null vectors are smooth, so a high-order discretization approximates them accurately.

The Fredholm property requires a Banach space, and $\mathbf X$, in which the identities of
\S\ref{sec:bie} were derived, is not one.  We therefore work in the Hilbert space
$\mathbf{Y}:=\prod_{\ell=0}^{2}\big(L^{2}(I_\ell)\times L^{2}(I_\ell)\big)$, on which the operators of \eqref{eq:system} are bounded.  The same arguments apply in other
Banach spaces, for instance spaces of continuous or H\"older
continuous functions.

\begin{proposition}
\label{prop:fredholm}
Assume that the contrast $\eta$ in \eqref{eq:bvp-transmission} satisfies $\eta\ne-1$.  Then, for every $\omega\in\mathbb C\setminus(-\infty,0]$ %
and every $\zeta\in\mathbb T^2$, the operator $\Lbf$ in \eqref{eq:system} is compact on $\mathbf{Y}$ and $\Ebf$ is invertible, so $\Ebf-\Lbf=\Ebf\big(\Id-\Ebf^{-1}\Lbf\big)$ is Fredholm of index zero on $\mathbf{Y}$.  Moreover, every solution $\bphi\in\mathbf{Y}$ of
\eqref{eq:system} is $C^\infty$, i.e.\ belongs to $\mathbf{X}$.
\end{proposition}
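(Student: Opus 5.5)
The proof splits into three parts: invertibility of $\Ebf$, compactness of $\Lbf$ block by block, and regularity of null vectors by bootstrapping.

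\emph{Invertibility of $\Ebf$.} $\Ebf$ is a diagonal matrix of constants, namely $\operatorname{diag}(1,\tfrac{1+\eta}{2})$ on $I_0$ and identities on $I_1,I_2$. It is therefore invertible precisely when $1+\eta\neq0$, which is the hypothesis $\eta\ne-1$.

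\emph{Compactness of $\Lbf$.} I split $\Lbf$ into the off-diagonal-curve blocks and the single coincident block $\Lbf^{0,0}$.

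For every block other than $\Lbf^{0,0}$, the source curve is disjoint from the target curve. This holds because $\Gamma_0$ lies in the open cell and every translate $\Gamma_0+\vv$ appearing misses the walls, and by the last statement of Proposition~\ref{prop:wall-rows}. Since the closures are disjoint compact sets, $R_{\ell,i}$ is bounded below. For $\omega\in\mathbb C\setminus(-\infty,0]$ the wavenumber $k_1$ lies off the branch cut, so the Hankel kernels~\eqref{eq:kernels} are $C^\infty$ in $(t,\tau)$ on $\overline{I_\ell}\times\overline{I_i}$. Such operators are Hilbert--Schmidt on $L^2$, hence compact. They also map $L^2$ into $C^\infty(\overline{I_\ell})$, which I will use in the regularity step.

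For $\Lbf^{0,0}$, I treat the four entries of~\eqref{eq:singular-blocks} using the standard small-argument expansions of $H^{(1)}_0$ and $H^{(1)}_1$, exactly as in Kress--Roach~\cite{KR1978}.
\begin{itemize}
\item $\Kop^{0,0}_s$ and $\Ktop^{0,0}_s$ have kernels that are continuous on a smooth closed curve, since the Laplace part $(\Rvec\cdot\nn)/R^2$ is smooth there.
\item $\eta\Vop^{0,0}-\Vop^{0,0}_2$ has a kernel of the form $A(t,\tau)\log|t-\tau|+B(t,\tau)$ with $A,B$ smooth.
\item For $\Wop^{0,0}-\Wop^{0,0}_2$, the leading hypersingular Laplace terms are independent of $k$ and cancel. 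What remains is $k_1^2-k_2^2$ times a logarithmic term, plus smooth terms.
\end{itemize}
Weakly singular kernels of this type are compact on $L^2(I_0)$, for instance as norm limits of Hilbert--Schmidt truncations, or by Schur's test on the remainder. Consequently $\Lbf$ is compact, $\Id-\Ebf^{-1}\Lbf$ is Fredholm of index zero by Riesz theory, and so is $\Ebf-\Lbf=\Ebf(\Id-\Ebf^{-1}\Lbf)$, since composing with an invertible operator does not change the index.

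\emph{Regularity.} Suppose $\bphi\in\mathbf Y$ satisfies $\bphi=\Ebf^{-1}\Lbf\bphi$. The wall components $\Phi_1,\Phi_2$ are given by the wall rows as sums of smooth-kernel operators applied to $\bphi$. Those operators map $L^2$ into $C^\infty$ up to the endpoints, so $\Phi_1,\Phi_2\in C^\infty$ at once. On $I_0$, the smooth-kernel contributions $\Lbf^{0,1}\Phi_1+\Lbf^{0,2}\Phi_2$ are $C^\infty$ and periodic. The remaining equation, $\mathsf E_0\Phi_0-\Lbf^{0,0}\Phi_0=f$ with $f$ smooth, is bootstrapped using the fact that $\Lbf^{0,0}$ is a periodic pseudodifferential operator of order $\le -1$: its entries have log or continuous kernels, i.e.\ smoothing of order one. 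Since $\mathsf E_0$ is invertible, $\Phi_0\in H^s$ implies $\Phi_0\in H^{s+1}$ for every $s$, and Sobolev embedding then gives $\Phi_0\in C^\infty$ with periodicity. Thus $\bphi\in\mathbf X$.

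\emph{Main obstacle.} I expect the hardest step to be the precise mapping property of $\Lbf^{0,0}$. One must show that after the Müller cancellation each entry is a pseudodifferential operator of order at most $-1$, equivalently that its kernel has the form $A\log(4\sin^2\frac{t-\tau}2)+B$ with smooth $A,B$ as in Kress's treatment, and that this gains a full Sobolev order. I would verify this by writing the series expansions of $H^{(1)}_0$ and $H^{(1)}_1$ and checking that the $R^{-2}$ and $\log$-derivative terms cancel between $k_1$ and $k_2$, so that the residual $W$-kernel is $\tfrac{1}{4\pi}(k_1^2-k_2^2)\log R$ plus smooth terms.
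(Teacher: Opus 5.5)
Your proposal is correct and follows the paper's proof essentially step for step: smooth, Hilbert--Schmidt kernels for every block whose source and target curves are disjoint, the M\"uller cancellation of the wavenumber-independent term $\nn_0(t)\cdot\nn_0(\tau)/(2\pi R_{0,0}^2)$ leaving kernels $a\log(4\sin^2\frac{t-\tau}{2})+b$ with $a,b$ smooth and periodic, invertibility of $\Ebf$ from $\eta\ne-1$, and the $H^r\to H^{r+1}$ bootstrap on $\Gamma_0$ once the wall rows have made $\Phi_1,\Phi_2$ smooth. Only cosmetic details differ, none of which affects the argument: these kernels are square integrable, so $\Lbf^{0,0}$ is Hilbert--Schmidt directly without truncation or Schur arguments, and the leading log coefficient of $Q^{0,0}_{W,1}-Q^{0,0}_{W,2}$ is $-\frac{k_1^2-k_2^2}{4\pi}\,\nn_0(t)\cdot\nn_0(\tau)$, followed by further log terms with smooth coefficients rather than purely smooth ones.
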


\begin{proof}
Every block of \eqref{eq:blocks-transmission} and \eqref{eq:blocks-stack}
other than $\Lbf^{0,0}$ has a source curve at positive distance from the target curve, hence a
kernel that is smooth on the closed parameter rectangle $\overline{I_\ell}\times\overline{I_i}$ of
$\Lbf^{\ell,i}$, and is Hilbert--Schmidt, hence compact~\cite[Ch.~2]{Kress2014}.  In
\eqref{eq:singular-blocks}, by \eqref{eq:kernels}, the kernels of $\Kop^{0,0}_s$ and $\Ktop^{0,0}_s$
are continuous and those of $\Vop^{0,0}_s$ are logarithmically singular.  The kernels of
$\Wop^{0,0}_s$ are hypersingular, but they split as~\cite{Kress1995}
\begin{equation}
  Q^{0,0}_{W,s}(t,\tau)=\frac{\nn_0(t)\cdot\nn_0(\tau)}{2\pi R^2_{0,0}}
  +a_s(t,\tau)\log\Big(4\sin^2\frac{t-\tau}{2}\Big)+b_s(t,\tau),
  \label{eq:hypersing}
\end{equation}
with $a_s,b_s$ smooth and $2\pi$-periodic in both parameters.  The leading term does not depend
on the wavenumber, so it cancels in $Q^{0,0}_{W,1}-Q^{0,0}_{W,2}$, which leaves a logarithmic
singularity (this is the classical M\"uller cancellation \cite{KR1978}).  All kernels of $\Lbf^{0,0}$
are therefore square integrable, so $\Lbf^{0,0}$ is Hilbert--Schmidt as well.
$\Ebf=\operatorname{diag}(1,\tfrac{1+\eta}2,1,1,1,1)$ is invertible because $\eta\ne-1$.

To prove the regularity, let $(\Ebf-\Lbf)\bphi=0$ with $\bphi\in\mathbf{Y}$.  The wall rows
\eqref{eq:rows-walls} read $\Phi_j=\sum_{i}\Lbf^{j,i}\Phi_i$, $j\in\{1,2\}$, and all operators on the right
have kernels in $C^\infty(\overline{I_j}\times\overline{I_i})$, so the wall densities satisfy
$\Phi_j\in C^\infty(\overline{I_j})^2$.  The transmission rows then read
$\mathsf E_0\Phi_0=\Lbf^{0,0}\Phi_0+g$, where $g:=\Lbf^{0,1}\Phi_1+\Lbf^{0,2}\Phi_2$ is smooth because the
kernels of $\Lbf^{0,j}$ are.  On the smooth closed curve $\Gamma_0$ all
entries of $\Lbf^{0,0}$ have kernels of the form $a(t,\tau)\log\big(4\sin^2\frac{t-\tau}2\big)+b(t,\tau)$
with $a,b$ smooth and $2\pi$-periodic (for $\Wop^{0,0}-\Wop^{0,0}_2$ by \eqref{eq:hypersing}), so they map
the periodic Sobolev space $H^r$ into $H^{r+1}$~\cite[Ch.~8]{Kress2014}.  Hence $\Phi_0\in H^r\times H^r$ implies $\Phi_0\in H^{r+1}\times H^{r+1}$, and induction from $r=0$ gives $\Phi_0\in H^r\times H^r$ for every $r\ge0$.  Since $H^r\subset C^k$ for $r>k+\tfrac12$~\cite{McLean2000}, $\Phi_0$ is therefore~$C^\infty$.
\end{proof}

Since $\omega\mapsto(\Ebf-\Lbf)(\omega,\kk)$ is holomorphic on $\mathbb{C}\setminus(-\infty,0]$, the analytic Fredholm theorem~\cite[Thm.~8.26]{CK2012}, applied to $\Id-\Ebf^{-1}\Lbf$, shows that either $\Ebf-\Lbf$ is singular for every $\omega$, or its singular frequencies are discrete, its null spaces are finite-dimensional, and its inverse is meromorphic in $\omega$.  The second case holds if the operator is invertible at a single frequency, which we assume but do not prove.

\section{Null vectors and Bloch modes}
\label{sec:spurious}

The system \eqref{eq:system} was derived from the Cauchy data of a Bloch mode.  This section addresses the converse question of which null vectors of \eqref{eq:system} arise from Bloch modes.  In general not all of them do, and we characterize those that do not, which we call \emph{spurious}.  Throughout, $k_1,k_2,\eta>0$ and $\zeta\in\mathbb T^2$.  We first summarize what the construction of \S\ref{sec:bie} already shows, namely that every Bloch mode yields a null vector.
\begin{lemma}
\label{lem:necessity}
Let $u$ be a Bloch mode, i.e., a nontrivial solution of \eqref{eq:bvp}, and $\bphi$ its Cauchy data
\eqref{eq:densities}.  Then $\bphi\ne0$ and $(\Ebf-\Lbf)\bphi=0$.
\end{lemma}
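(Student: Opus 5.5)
The proof splits into two parts: that $(\Ebf-\Lbf)\bphi=0$, which collects the trace identities of \S\ref{sec:bie}, and that $\bphi\ne0$, which is a uniqueness argument.

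\emph{The equation holds.} Since $u$ is classical and quasi-periodic, the data $\bphi$ of \eqref{eq:densities} are smooth on the closed parameter intervals, so $\bphi\in\mathbf X$ (at least $C^1$ traces suffice for the representations). For the transmission rows we use Lemma~\ref{lem:green-extinction} with $\mathcal P=U$. It gives $u_U[\bphi]=u$ in $U\setminus\overline\Omega$, so the exterior trace of $u_U[\bphi]$ on $\Gamma_0$ equals $\bgamma^+_0u=\Neta\Phi_0$ by \eqref{eq:bvp-transmission}. Likewise $u_\Omega[\bphi]=u$ in $\Omega$, so its interior trace is $\Phi_0$. The jump relations \eqref{eq:jumps-pair}, together with Lemma~\ref{lem:translation} for the translated walls, which are disjoint from $\Gamma_0$, yield exactly \eqref{eq:traces-transmission}. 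Adding the two identities gives \eqref{eq:rows-transmission}.

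For the wall rows we apply Lemma~\ref{lem:green-extinction} with $\mathcal P=S_j$. Here $\Gamma_j$ and $\Gamma_j+\aaa_j$ lie on $\partial S_j$, and the traces are taken from inside, on the $+$ side of $\Gamma_j$ and the $-$ side of $\Gamma_j+\aaa_j$. They therefore equal $\Phi_j$ and, by \eqref{eq:extension}, $\zeta_j\Phi_j$, at interior points of the arcs. Hence the left side of \eqref{eq:wall-combination} equals $2\Phi_j$, and Proposition~\ref{prop:wall-rows} gives \eqref{eq:rows-walls} on the interior of $I_j$. Both sides are continuous up to the endpoints, since the kernels are smooth there, so the identity extends to all of $\overline{I_j}$. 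Together these give \eqref{eq:system}.

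\emph{Nontriviality.} Suppose $\bphi=0$, so in particular $\Phi_0=0$. Then $u$ and $\partial_{\nn}u$ vanish on $\Gamma_0$ from inside, and by \eqref{eq:bvp-transmission} also from outside. By Lemma~\ref{lem:green-extinction}, $u=u_\Omega[\bphi]=-\Gpot^0_2\Phi_0=0$ in $\Omega$. Similarly $u=u_U[\bphi]=0$ in $U\setminus\overline\Omega$, because every density entering the cell representation vanishes. So $u\equiv0$ on $U$ up to null sets, and by \eqref{eq:bvp-bloch} on all of $\mathbb R^2$. This contradicts $u\ne0$.

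This uses only Green's formula and the Bloch relation, not Holmgren or unique continuation, so it is simple. The main care point lies elsewhere: justifying that Green's formula and the trace identities hold for the open wall arcs. The representations may be singular at the corners, so we must evaluate only at interior points and then extend by continuity using the smooth combined kernels.
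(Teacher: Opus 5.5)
Your proof follows the paper's: the equation is just the collection of trace identities of \S\ref{sec:bie}, which the paper summarizes as ``by construction''. Nontriviality follows because $\bphi=0$ makes $u_U[\bphi]$ and $u_\Omega[\bphi]$ vanish, so Lemma~\ref{lem:green-extinction} and \eqref{eq:bvp-bloch} give $u\equiv0$.

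The one weak spot is your reason for $\bphi\in\mathbf X$. On the walls it follows from analyticity of $u$ in $D_1$. On $\Gamma_0$, however, \eqref{eq:regularity} together with quasi-periodicity only gives $\phi_0\in C^1$ and $\psi_0\in C^0$. To get smooth $\Phi_0$ you need, as the paper uses, elliptic regularity for the transmission problem.
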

\begin{proof}
By elliptic regularity for the transmission problem~\cite{McLean2000}, $\bphi\in\mathbf X$, and
$(\Ebf-\Lbf)\bphi=0$ by construction.  Finally, $\bphi\ne0$, since otherwise $u_U[\bphi]$ and $u_\Omega[\bphi]$
vanish, so $u=0$ in the cell by Lemma~\ref{lem:green-extinction}, hence everywhere by
\eqref{eq:bvp-bloch}, contradicting $u\ne0$.
\end{proof}

The converse may fail.  Given a nontrivial solution $\bphi\in\mathbf{X}$ of~\eqref{eq:system}, let
$u_U:=u_U[\bphi]$ and $u_\Omega:=u_\Omega[\bphi]$ be the cell and interior representations
\eqref{eq:rep-cell} and \eqref{eq:rep-interior}, and consider the candidate mode
\begin{equation}
  u:=\mathrm{e}^{i\kk\cdot\vv}u_{\mathrm c}(\cdot-\vv)\ \text{ in } U+\vv,\quad \vv\in\Lambda,
  \qquad\text{where}\quad
  u_{\mathrm c}:=\begin{cases} u_U & \text{in }U\setminus\overline{\Omega},\\[2pt] u_\Omega & \text{in }\Omega.\end{cases}
  \label{eq:candidate}
\end{equation}
We call $\bphi$ \emph{genuine} if it is the Cauchy data of a Bloch mode.  Since the potentials solve the
Helmholtz equation, a spurious $\bphi$ can fail in only two ways, either (T) the transmission
conditions \eqref{eq:bvp-transmission} fail on $\Gamma_0$, or (Q) the Cauchy data of the phased
extension in \eqref{eq:candidate} do not match across $\partial U$.  We show that (T) never occurs
(Lemma~\ref{lem:complementary}) and that a null vector is genuine exactly when its cell representation
vanishes outside the cell (Theorem~\ref{thm:extinction}).  For a spurious null vector, this exterior field is then a nonzero outgoing solution of the Helmholtz equation.
\begin{lemma}
\label{lem:complementary}
Let $\bphi=(\Phi_0,\Phi_1,\Phi_2)\in \mathbf{X}$ satisfy \eqref{eq:rows-transmission}, but not necessarily the
wall equations \eqref{eq:rows-walls}, and let
$u_U:=u_U[\bphi]$ and $u_\Omega:=u_\Omega[\bphi]$ be the cell and interior representations
\eqref{eq:rep-cell} and \eqref{eq:rep-interior}.  Then $\bgamma^+_0u_U=\Neta\Phi_0$,
$\bgamma^-_0u_\Omega=\Phi_0$, and $u_U=0$ in $\Omega$.  In particular, the candidate \eqref{eq:candidate} satisfies the transmission
conditions \eqref{eq:bvp-transmission} on $\Gamma_0$.
\end{lemma}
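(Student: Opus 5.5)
The plan is to run the classical M\"uller uniqueness argument, applied to the two ``wrong-side'' fields: $u_U$ inside $\Omega$, a solution of the Helmholtz equation at $k_1$, and $u_\Omega$ outside $\overline\Omega$, a radiating solution at $k_2$. The M\"uller equation forces the Cauchy data of these two fields on $\Gamma_0$ to be negatives of each other. An energy argument combined with Rellich's lemma then makes both fields vanish.

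\emph{Step 1 (jumps and defect).} The only source in $u_U$ lying on $\Gamma_0$ is the term $\Gpot^0\Neta\Phi_0$; the walls are at positive distance from $\Gamma_0$. Hence \eqref{eq:jumps-pair} gives
\begin{equation*}
\bgamma^+_0u_U=\tfrac12\Neta\Phi_0+\Cop^{0,0}\Neta\Phi_0+\textstyle\sum_{j}\Lbf^{0,j}\Phi_j,
\qquad
\bgamma^+_0u_U-\bgamma^-_0u_U=\Neta\Phi_0 .
\end{equation*}
Likewise, for $u_\Omega=-\Gpot^0_2\Phi_0$ we have $\bgamma^-_0u_\Omega=\tfrac12\Phi_0-\Cop^{0,0}_2\Phi_0$ and $\bgamma^+_0u_\Omega-\bgamma^-_0u_\Omega=-\Phi_0$. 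Define the defect $\mathbf a:=\bgamma^+_0u_U-\Neta\Phi_0$. Adding the two interior-side expressions and using $\mathsf E_0=\tfrac12(\Id+\Neta)$ gives
\begin{equation*}
\big(\bgamma^+_0u_U-\Neta\Phi_0\big)+\big(\bgamma^-_0u_\Omega-\Phi_0\big)=-\mathsf E_0\Phi_0+\textstyle\sum_i\Lbf^{0,i}\Phi_i=0 ,
\end{equation*}
where the last equality is \eqref{eq:rows-transmission}. Combining this with the two jump relations yields $\bgamma^-_0u_U=\mathbf a$ and $\bgamma^+_0u_\Omega=-\mathbf a$. Thus $w:=u_U|_\Omega$ and $v:=u_\Omega|_{\mathbb R^2\setminus\overline\Omega}$ satisfy $w=-v$ and $\partial_{\nn_0}w=-\partial_{\nn_0}v$ on $\Gamma_0$. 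Notably, no factor $\eta$ enters this relation.

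\emph{Step 2 (energy and Rellich).} Since $\bphi\in\mathbf X$ and $\Gamma_0$ is smooth, both $w$ and $v$ are $C^1$ up to $\Gamma_0$, so Green's first identity applies. In $\Omega$ it gives $\int_{\Gamma_0}\bar w\,\partial_{\nn_0}w\,\mathrm ds=\int_\Omega(|\nabla w|^2-k_1^2|w|^2)$, which is real because $k_1>0$. By Step 1 the same boundary integral equals $\int_{\Gamma_0}\bar v\,\partial_{\nn_0}v\,\mathrm ds$, so the imaginary part of the latter vanishes. The function $v$ is a single- and double-layer potential at $k_2>0$, hence it satisfies the Sommerfeld radiation condition. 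Rellich's lemma (in the form of \cite{CK1983}, Thm.~3.12-type) therefore yields $v=0$ in $\mathbb R^2\setminus\overline\Omega$. It follows that $\mathbf a=-\bgamma^+_0u_\Omega=0$. Consequently $w$ has vanishing Cauchy data on $\Gamma_0$, and Green's representation formula in $\Omega$ at $k_1$ gives $w=0$, i.e.\ $u_U=0$ in $\Omega$.

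\emph{Step 3 (conclusion).} From $\mathbf a=0$ we obtain $\bgamma^+_0u_U=\Neta\Phi_0$ directly. Step 1 gives $\bgamma^-_0u_\Omega=\Phi_0-\mathbf a=\Phi_0$. Reading off the components, the candidate \eqref{eq:candidate} has Dirichlet traces $\phi_0$ on both sides of $\Gamma_0$, and Neumann traces $\eta\psi_0$ from outside and $\psi_0$ from inside. This is exactly \eqref{eq:bvp-transmission}.

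The main obstacle is the careful bookkeeping in Step 1: the signs of the jumps and the placement of $\Neta$ must combine so that $\mathsf E_0$ cancels exactly, leaving the clean sign-flip relation between $w$ and $v$. A second point to check is that the wall sources of $u_U$ are genuinely smooth near $\Gamma_0$ (they are, since $\overline\Omega\cap\partial U=\emptyset$), so they do not disturb the jumps. Once the relation $w=-v$, $\partial_{\nn_0}w=-\partial_{\nn_0}v$ is established, Step 2 is standard.
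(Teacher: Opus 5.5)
Your proposal is correct and follows the paper's proof. The paper uses the same residual bookkeeping to show that $u_U|_\Omega$ and $u_\Omega|_{\mathbb R^2\setminus\overline\Omega}$ have opposite Cauchy data on $\Gamma_0$, with no factor $\eta$. It then glues them into a single radiating transmission solution ($k_1$ in $\Omega$, $k_2$ outside, continuous Cauchy data) and cites the uniqueness theorem \cite[Thm.~3.40]{CK1983}, whose proof is essentially your Step~2 (energy identity, Rellich's lemma, Green's representation in $\Omega$) written out inline.
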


\begin{proof}
Let $e_0^+:=\bgamma^+_0u_U-\Neta\Phi_0$ and $e_0^-:=\bgamma^-_0u_\Omega-\Phi_0$ be the transmission
residuals.  By the jump relations~\eqref{eq:jumps-pair}, the identities \eqref{eq:traces-transmission} are
$e_0^+=0$ and $e_0^-=0$, and together they give the transmission conditions on $\Gamma_0$ for the
candidate \eqref{eq:candidate}.  The system, however,
imposes only the sum of these two identities, namely \eqref{eq:rows-transmission}, which reads
$e_0^++e_0^-=0$.  The point of this proof is that, for $k_1,k_2>0$, this sum alone forces both residualss to vanish.  The argument is the one used in~\cite{KR1978} to prove the injectivity of M\"uller's BIE system for the Helmholtz transmission problem.

Let $w_1:=u_U|_{\Omega}$, which solves the Helmholtz equation at wavenumber $k_1$ in $\Omega$, and
$w_2:=u_\Omega|_{\mathbb{R}^2\setminus\overline{\Omega}}$, which solves it at $k_2$ outside $\Omega$ and is
radiating, since \eqref{eq:rep-interior} is a free-space potential on the bounded curve $\Gamma_0$.
Across $\Gamma_0$, $u_U$ jumps by $\Neta\Phi_0$ and $u_\Omega$ by $-\Phi_0$, so
$\bgamma^-_0w_1=e_0^+$ and $\bgamma^+_0w_2=e_0^-$.  Now define $v:=w_1$ in $\Omega$ and $v:=-w_2$ in
$\mathbb{R}^2\setminus\overline{\Omega}$.  Its Cauchy data on $\Gamma_0$ are $e_0^+$ from inside and
$-e_0^-$ from outside, and these coincide because $e_0^++e_0^-=0$.  Thus $v$ is a radiating solution
of the transmission problem with wavenumber $k_1$ in $\Omega$, $k_2$ outside, and continuous Cauchy
data across $\Gamma_0$.  Since $k_1,k_2>0$, this problem has only the trivial
solution~\cite[Thm.~3.40]{CK1983}, so $v=0$.  Hence $u_U=w_1=0$ in $\Omega$, $e_0^+=\bgamma^-_0w_1=0$,
and $e_0^-=-e_0^+=0$.
\end{proof}

Next we turn our attention to (Q). For a Bloch mode, Green's formula makes the cell representation \eqref{eq:rep-cell} vanish outside the cell (Lemma~\ref{lem:green-extinction}).  This property characterizes genuine null vectors.

\begin{theorem}%
\label{thm:extinction}
Let $\bphi=(\Phi_0,\Phi_1,\Phi_2)\in \mathbf{X}$ satisfy \eqref{eq:rows-transmission}, but not necessarily the
wall equations \eqref{eq:rows-walls}, and let $w:=u_U[\bphi]|_{\Ue}$ be the
restriction of the cell representation~\eqref{eq:rep-cell} to the exterior
$\Ue:=\mathbb{R}^2\setminus\overline{U}$ of the cell.  Then $w\equiv0$ in $\Ue$ if and only if the candidate~\eqref{eq:candidate} is a
quasi-periodic transmission solution with Cauchy data~$\bphi$.  In particular, a nontrivial null vector of \eqref{eq:system} is genuine if and only if $w\equiv0$.
\end{theorem}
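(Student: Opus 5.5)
The forward implication is immediate. If the candidate \eqref{eq:candidate} is a quasi-periodic transmission solution with Cauchy data $\bphi$, it is a Bloch mode, or zero. Lemma~\ref{lem:green-extinction} with $\mathcal P=U$ then gives $u_U[\bphi]=0$ in $\mathbb R^2\setminus\overline U$, i.e.\ $w\equiv0$. (If $u\equiv0$, then $\bphi=0$ and $w=0$ trivially.) The substance is therefore the converse, and I would prove it by reading off the interior traces of $u_U$ on $\partial U$ from the jump relations, and then checking that the phased extension glues.

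Assume $w\equiv0$. By Lemma~\ref{lem:complementary}, which uses only \eqref{eq:rows-transmission}, the candidate already satisfies the transmission conditions on $\Gamma_0$. It also has Cauchy data $\Neta\Phi_0$ and $\Phi_0$ there, and it solves the correct Helmholtz equation in $U\setminus\overline\Omega$ and in $\Omega$.

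Next I would look at the walls. In \eqref{eq:rep-cell} the only source lying on the open arc $\Gamma_j$ is $\Gpot^j\Phi_j$, and the only one on $\Gamma_j+\aaa_j$ is $-\zeta_j\Gpot^j[\aaa_j]\Phi_j$. By Lemma~\ref{lem:translation}, the jumps $\bgamma^+-\bgamma^-$ across these arcs are therefore $\Phi_j$ and $-\zeta_j\Phi_j$. On $\Gamma_j$ the $+$ side ($\nn_j$ points along $+\aaa_j$) lies in $U$; on $\Gamma_j+\aaa_j$ the $-$ side lies in $U$. Since $w=0$ gives vanishing traces from $\Ue$, the interior traces are
\[
  \bgamma^+_ju_U=\Phi_j,\qquad \bgamma^-_j[\aaa_j]u_U=\zeta_j\Phi_j .
\]
In the phased extension, the trace on $\Gamma_j+\aaa_j$ taken from $U+\aaa_j$ equals $\zeta_j\bgamma^+_ju_U=\zeta_j\Phi_j$. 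This coincides with the trace from $U$, so both Dirichlet and Neumann data match across every open wall arc, and by quasi-periodicity across all translated walls. A standard Green's formula argument on a small neighbourhood of an interior arc point then shows that $u$ solves $\Delta u+k_1^2u=0$ across the open wall arcs. Combined with the above, this gives a quasi-periodic transmission solution away from the corners $\mathbf c+\Lambda$, with Cauchy data $\bphi$ on $\Gamma_1,\Gamma_2$ and $\Gamma_0$.

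The main obstacle is the corners, where the wall potentials with densities smooth up to the endpoints may be singular; at worst I expect $O(\log r)$ growth. On a punctured disk around a corner, $u$ solves Helmholtz, so any isolated singularity compatible with logarithmic growth is $c\,H^{(1)}_0(k_1r)$ plus a regular part. I would kill $c$ by a flux argument. Apply Green's identity on each of the four sectors of a small disk cut out by the walls and sum the results. The wall contributions cancel pairwise, because the Cauchy data match and the traces are integrable near the endpoint. What remains is
\[
  \oint_{|\rr-\mathbf c|=r}\partial_r u\,\mathrm ds=-k_1^2\int_{|\rr-\mathbf c|<r}u\,\mathrm d\rr ,
\]
and the right-hand side tends to $0$ as $r\to0$ since $u$ is locally integrable. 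But the logarithmic term contributes a flux tending to $2\pi c$ (up to normalization), so $c=0$ and the singularity is removable. This yields the regularity \eqref{eq:regularity}, and hence the equivalence.

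The final claim follows. A null vector of \eqref{eq:system} satisfies \eqref{eq:rows-transmission}, so the equivalence applies. If it is nontrivial and $w\equiv0$, the resulting $u$ is nonzero, since its Cauchy data $\bphi\neq0$. Hence $u$ is a Bloch mode, and $\bphi$ is genuine.
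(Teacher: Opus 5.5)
Your forward direction is correct, and so is your gluing across the open wall arcs. Reading off $\bgamma^+_ju_U=\Phi_j$ and $\bgamma^-_j[\aaa_j]u_U=\zeta_j\Phi_j$ from the jumps and $w\equiv0$, and then using a weak-solution argument, works at interior arc points. The gap is at the corners $\mathbf c+\Lambda$, which you rightly call essential, since \eqref{eq:regularity} requires $u\in C^2(D_1)$ and the corners lie in $D_1$.

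Two steps there are not established. First, the $O(\log r)$ bound is only ``expected''. Second, and more seriously, the flux identity does not follow from what you invoke. Apply Green's identity on the four sectors with a disk of radius $\epsilon$ removed around $\mathbf c$, and let the wall terms cancel pairwise as you say. This gives only $F(r)-F(\epsilon)=-k_1^2\int_{\epsilon<|\rr-\mathbf c|<r}u\,\mathrm d\rr$, where $F(\rho):=\oint_{|\rr-\mathbf c|=\rho}\partial_ru\,\mathrm ds$. For $u=c\,H^{(1)}_0(k_1r)+{}$regular, discarding $F(\epsilon)$ as $\epsilon\to0$ is exactly the statement $c=0$, so the argument is circular.

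Your stated justification, matching Cauchy data and integrable wall traces, cannot suffice. It is equally satisfied by $u=G_{k_1}(\cdot,\mathbf c)$ cut by any four rays through $\mathbf c$, whose flux through small circles tends to $-1$. Applying the divergence theorem on each sector with its vertex included would instead require the flux through small arcs inside each sector to vanish. That is not automatic, because the gradient of a wall double-layer potential whose density does not vanish at an arc endpoint behaves like $1/r$ there.

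The missing idea, which the paper uses, makes walls and corners trivial at once. Because $u_U=0$ in $\Ue$ and in $\Omega$, the phased sum $\tilde u=\sum_{\vv\in\Lambda}\mathrm e^{i\kk\cdot\vv}u_U(\cdot-\vv)$ agrees with the candidate in $D_1$ off the walls. Near any point it reduces to the terms of the at most four cells whose closures contain that point. In this finite sum, each wall arc $\Gamma_j+\vv$ through the point carries two densities:
\begin{itemize}
\item $\mathrm e^{i\kk\cdot\vv}\Phi_j$ from $U+\vv$;
\item $-\zeta_j\mathrm e^{i\kk\cdot(\vv-\aaa_j)}\Phi_j=-\mathrm e^{i\kk\cdot\vv}\Phi_j$ from $U+\vv-\aaa_j$.
\end{itemize}
These potentials therefore cancel identically. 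What remains is a finite sum of layer potentials on curves at positive distance from the point, which is smooth there, corners included. No singularity analysis or Green's-formula gluing is needed.

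If you prefer to keep your route, you must first prove that $u_U$ is bounded near $\mathbf c$. Single-layer potentials of smooth densities are continuous at arc endpoints, and double-layer potentials are bounded there, with an angular discontinuity. Removability of bounded isolated singularities of Helmholtz solutions then finishes the proof, and the flux argument can be dropped entirely.
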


\begin{proof}
If the candidate $u$ in~\eqref{eq:candidate} solves \eqref{eq:bvp} with Cauchy data $\bphi$, in the sense of
\eqref{eq:densities}, then $w\equiv0$
by Lemma~\ref{lem:green-extinction}.

Conversely, suppose $w\equiv0$, and write $u_U:=u_U[\bphi]$.  Then $u_U=0$ in $\Ue$, and in $\Omega$ by
Lemma~\ref{lem:complementary}.  We show that the phased lattice sum
$\tilde u:=\sum_{\vv\in\Lambda}\mathrm{e}^{i\kk\cdot\vv}u_U(\cdot-\vv)$, which coincides with the candidate in
$D_1=\mathbb R^2\setminus\overline{D_2}$, extends smoothly across the walls, with Cauchy data $\Phi_j$ on
them.  The term $\vv$ of $\tilde u$ vanishes outside $(\overline U\setminus\Omega)+\vv$, so near any point
at most four terms are nonzero.  For $\mathbf a\in\Lambda$, substituting $\vv=\vv'+\mathbf a$ in the sum gives
$\tilde u(\cdot+\mathbf a)=\sum_{\vv'\in\Lambda}\mathrm e^{i\kk\cdot(\vv'+\mathbf a)}u_U(\cdot-\vv')
=\mathrm e^{i\kk\cdot\mathbf a}\tilde u$, so $\tilde u$ is quasi-periodic, and since it
equals $u_U$ in $U\setminus\overline\Omega$, it coincides with the candidate in $D_1$.

Near any point of $D_1$, walls and corners included, $\tilde u$ is the sum of the terms of the cells
whose closures contain the point.  In this finite sum, each wall arc $\Gamma_j+\vv$, $j\in\{1,2\}$,
through the point appears in the terms of the two cells it separates, $U+\vv$ and $U+\vv-\aaa_j$, with
densities $\mathrm{e}^{i\kk\cdot\vv}\Phi_j$ and $-\zeta_j\mathrm{e}^{i\kk\cdot(\vv-\aaa_j)}\Phi_j$ by
\eqref{eq:rep-cell} and \eqref{eq:shift}, which cancel since $\zeta_j=\mathrm{e}^{i\kk\cdot\aaa_j}$.  Near the
point, $\tilde u$ is therefore a finite sum of layer potentials on curves away from it, which gives the
smooth extension.  On $\Gamma_j$ the normal $\nn_j$ points into $U$ (Figure~\ref{fig:geometry}), so
by~\eqref{eq:jumps-pair} $\bgamma^+_ju_U-\bgamma^-_ju_U=\Phi_j$ with $\bgamma^-_ju_U=0$, and the trace of
$\tilde u$ on $\Gamma_j$ is $\bgamma^+_ju_U=\Phi_j$.  Finally, by Lemma~\ref{lem:complementary}, the
candidate satisfies the transmission conditions with Cauchy data $\Phi_0$ on $\Gamma_0$, so it is a
quasi-periodic transmission solution with Cauchy data $\bphi$. This completes the proof.
\end{proof}

In practice, to verify that a null vector is genuine, it suffices to check that $w$ vanishes on a closed curve around the cell, as the following corollary shows.  Since $w$ is real analytic in the
connected set $\Ue$, other conditions would serve as well, for instance $w=0$ on a nonempty open subset of $\Ue$, or the vanishing of both $w$ and its normal derivative on an arc.
\begin{corollary}
\label{cor:curve}
Let $\bphi$ be a nontrivial null vector of \eqref{eq:system}, and let $\mathcal C$ be a smooth closed curve
enclosing $\overline U$.  Then $\bphi$ is genuine if and only if $u_U[\bphi]=0$ on $\mathcal C$.
\end{corollary}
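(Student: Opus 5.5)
By Theorem~\ref{thm:extinction}, a nontrivial null vector $\bphi$ is genuine if and only if $w:=u_U[\bphi]|_{\Ue}$ vanishes identically. The plan is therefore to show that $w\equiv0$ in $\Ue$ holds exactly when $u_U[\bphi]=0$ on $\mathcal C$. A null vector satisfies the transmission rows \eqref{eq:rows-transmission}, so Theorem~\ref{thm:extinction} applies. Proposition~\ref{prop:fredholm} gives $\bphi\in\mathbf X$, so all layer potentials involved are well defined.

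The forward direction is immediate. If $\bphi$ is genuine, then $w\equiv0$ in $\Ue$. Since $\mathcal C$ encloses $\overline U$, it lies in $\Ue$, so $u_U[\bphi]=0$ on $\mathcal C$.

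For the converse, suppose $u_U[\bphi]=0$ on $\mathcal C$. Let $E$ be the unbounded exterior of $\mathcal C$.
\begin{itemize}
\item $u_U[\bphi]$ is a finite sum of free-space layer potentials at wavenumber $k_1>0$ on the bounded curves $\Gamma_0$, $\Gamma_j$ and $\Gamma_j+\aaa_j$, all contained in $\overline U$.
\item In $E$ it therefore solves the Helmholtz equation, is radiating (it satisfies the Sommerfeld condition), and is continuous up to $\mathcal C$, where it vanishes.
\item Uniqueness for the exterior Dirichlet problem with $k_1>0$ \cite{CK1983} (Rellich's lemma) gives $u_U[\bphi]=0$ in $E$.
\end{itemize}

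It remains to propagate this to all of $\Ue$. The set $\Ue$ is connected: $\partial U$ is a simple closed curve, so by the Jordan curve theorem its exterior is connected. The function $w$ solves the Helmholtz equation there and is therefore real analytic. It vanishes on the nonempty open subset $E\subset\Ue$, so unique continuation gives $w\equiv0$ in $\Ue$, and Theorem~\ref{thm:extinction} yields that $\bphi$ is genuine.

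The only delicate point is the Dirichlet uniqueness step. One must check that $\mathcal C$ is disjoint from all source curves, so that $u_U[\bphi]$ is smooth up to $\mathcal C$ from outside, and that $k_1$ is real and positive. Both hold here: $\mathcal C$ encloses $\overline U$, and $\omega>0$.
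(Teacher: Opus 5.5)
Your proof is correct and follows essentially the same route as the paper. The forward direction comes from Theorem~\ref{thm:extinction}; the converse uses Rellich's lemma to show the radiating field $u_U[\bphi]$ vanishes outside $\mathcal C$, then unique continuation in the connected set $\Ue$. Your citation of uniqueness for the exterior Dirichlet problem packages the paper's explicit Green's-theorem step $\operatorname{Im}\int_{|\rr|=R}\overline{w}\,\partial_r w\,\mathrm{d}s=0$.
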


\begin{proof}
Let $w:=u_U[\bphi]|_{\Ue}$ as the theorem states.  If $\bphi$ is genuine, then $w\equiv0$ by Theorem~\ref{thm:extinction}, so in
particular $w=0$ on $\mathcal C$.  Conversely, suppose $w=0$ on $\mathcal C$.  Since $u_U[\bphi]$ is a finite
sum of layer potentials with the free-space kernel on bounded curves, $w$ is an outgoing solution of
the Helmholtz equation at wavenumber $k_1$ in $\Ue$.  Green's theorem in the region between $\mathcal C$
and a circle $|\rr|=R$ enclosing it gives $\operatorname{Im}\int_{|\rr|=R}\overline w\,\partial_rw\,\mathrm ds=0$,
so $w=0$ outside $\mathcal C$ by Rellich's lemma~\cite[Thm.~2.14]{CK2012}, and in all of $\Ue$ by unique
continuation.  Hence $\bphi$ is genuine by Theorem~\ref{thm:extinction}.
\end{proof}

\begin{remark}
By Theorem~\ref{thm:extinction}, extinction outside the cell could replace the wall equations in
\eqref{eq:system}.  Imposing it, however, means evaluating the potentials away from their sources, a
smoothing operation that destroys the second-kind structure.  We therefore use extinction only as an
a posteriori test.
\end{remark}

Lemmas~\ref{lem:necessity} and~\ref{lem:complementary}, Theorem~\ref{thm:extinction} and
Corollary~\ref{cor:curve} give the picture used in computations.  No Bloch mode is missed, and a
spurious null vector is recognized by a nonzero cell representation on a curve around the cell.  The
field residuals of \S\ref{sec:screening} sample it at finitely many points and are a numerical
surrogate for this test.  

\section{Discretization and band search}
\label{sec:numerics}

\subsection{Discretization}
\label{sec:discretization}

We discretize the BIE system~\eqref{eq:system} by a Nystr\"om method with composite Gauss--Legendre quadrature on the inclusion boundary and the two walls, implemented in the Julia package \texttt{Inti.jl}~\cite{Inti,AFP2024}.  Singular and nearly singular integrals, in the operators as well as in the evaluation of the layer potentials near the curves, are computed by the general-purpose density interpolation method~\cite{FPB2021}, which allows for closely spaced curves to be handled without local refinement. (On the closed curve $\Gamma_0$, the logarithmically singular self-interaction could instead be discretized by Martensen--Kussmaul quadrature~\cite[\S3.5]{CK2012}, which is spectrally accurate for smooth curves.  The walls are open arcs and would still require a separate treatment.)  Translated curves inherit the quadrature nodes of the original curves, so the cancellations of Proposition~\ref{prop:wall-rows} are exact at the discrete level.

It follows from \eqref{eq:blocks-transmission} and \eqref{eq:blocks-stack} that the BIE operator is a Laurent polynomial in the Bloch multipliers $\zeta_1,\zeta_2$,
\begin{equation}
  \Ebf-\Lbf=\sum_{(p,q)\in\mathcal{I}}\zeta_1^{\,p}\zeta_2^{\,q}\,\mathsf{T}^{(p,q)}(\omega),
  \qquad
  \mathcal{I}=\{-1,0,1\}^2\cup\{(-1,2),(0,2),(2,-1),(2,0)\},
  \label{eq:polystructure}
\end{equation}
with thirteen operator coefficients $\mathsf T^{(p,q)}(\omega)$.  Each term collects the operators $\Cop^{\ell,i}[\vv]$ (and, for $(p,q)=(0,0)$, $\Ebf$) that multiply $\zeta_1^p\zeta_2^q$ in \eqref{eq:blocks-transmission} and \eqref{eq:blocks-stack}. These depend on $\omega$, through $k_1,k_2$, and on the geometry, but not on $\kk$, which enters only through the scalar weights $\zeta_1^p\zeta_2^q$, with $\zeta_j=\mathrm{e}^{i\kk\cdot\aaa_j}$ as in~\eqref{eq:zeta-def}.  Many of the coefficients are moreover block sparse (Figure~\ref{fig:sparsity}).

The discretization preserves this structure.  If $T^{(p,q)}(\omega)\in\mathbb C^{N\times N}$ denotes the
Nystr\"om discretization of $\mathsf T^{(p,q)}(\omega)$, with $N$ the total number of unknowns, the
discretization of $\Ebf-\Lbf$ is given by
\begin{equation}
  T(\omega,\kk)=\sum_{(p,q)\in\mathcal I}\zeta_1^{\,p}\zeta_2^{\,q}\,T^{(p,q)}(\omega).
  \label{eq:polystructure-discrete}
\end{equation}

At each frequency $\omega$, the thirteen matrices $T^{(p,q)}(\omega)$ are assembled once, compressed as
hierarchical matrices ($\mathcal H$-matrices)~\cite{Hackbusch2015,HMatrices}, and reused for every Bloch vector $\kk$, in particular for all points of the Brillouin-zone path along which a band diagram is computed~(\S\ref{sec:continuation}).

\begin{figure}[tb]
\centering
\includegraphics{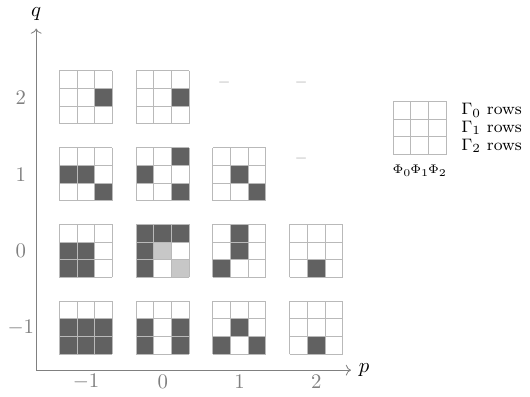}
\caption{Block sparsity of the thirteen coefficients $\mathsf T^{(p,q)}$ of \eqref{eq:polystructure}.
At position $(p,q)$ of the grid, a $3\times3$ array marks the nonzero blocks of $\mathsf T^{(p,q)}$, with rows indexed by the equations on $\Gamma_0,\Gamma_1,\Gamma_2$ and columns by $\Phi_0,\Phi_1,\Phi_2$.  Dark cells come from the operator
$\Lbf$, light cells from the identity blocks of $\Ebf$, and white cells are zero blocks. A dash marks
an exponent $(p,q)\notin\mathcal I$.}
\label{fig:sparsity}
\end{figure}

\subsection{Computing bands by AAA rational approximation}
\label{sec:aaa}

We use the randomly scalarized resolvent method put forth in~\cite{BST2026} to locate the real frequencies at which~$T(\omega,\kk)$ is singular.  We summarize it below and specify its use with our stack formulation, in particular the reuse of assembled operators across Bloch vectors via~\eqref{eq:polystructure} and the screening of candidate modes by the field residuals of \S\ref{sec:screening}.

\subsubsection{Detection and refinement}
\label{sec:sampling}
For a fixed Bloch vector $\kk$, we sample the scalarized resolvent
\begin{equation}
 f_\kk(\omega)=\mathbf p^{\mathsf H}T(\omega,\kk)^{-1}\mathbf q
 \label{eq:scalarized}
\end{equation}
at real frequencies $\omega$, where $\mathbf p,\mathbf q\in\mathbb C^{N}$ are random probe vectors.  If the
matrix family $T(\cdot,\kk)$ is holomorphic and regular, random scalarization preserves its poles with
probability one~\cite{BST2026}.  Each sample costs one unrestarted GMRES solve~\cite{SS1986} with the
compressed operators of \S\ref{sec:discretization}.  The poles near the real axis of AAA rational
approximants~\cite{AAA2018}, computed with \texttt{Rational\allowbreak Function\allowbreak Approximation.jl}~\cite{RFA}, of $f_\kk$ are
candidates for Bloch frequencies, which are then refined and screened.
The scalar fit does not give the multiplicity of a Bloch frequency, which a minor modification of the
algorithm would provide~\cite{BST2026}.  We do not pursue this here.

We use smooth probes.  The density interpolation method attains high order by interpolating the
density at each target point, so it is accurate only for smooth densities, and the solutions
$T(\omega,\kk)^{-1}\mathbf q$ of a second-kind system are only as smooth as $\mathbf q$.  On each of the six
blocks of unknowns, a probe is the restriction to the quadrature nodes of a random field
$\sum_{m=1}^{16}c_m e^{\mathrm i\boldsymbol\kappa_m\cdot\mathbf x}$, with complex Gaussian coefficients $c_m$ and
wave vectors $\boldsymbol\kappa_m$ of uniformly random direction and length uniform in $(0,8\pi/a]$, that is,
wavelengths of at least $a/4$.  Both probes are normalized to Euclidean norm $\sqrt N$ and kept fixed
within a search.

We use the adaptive search of~\cite[Alg.~2]{BST2026}, in the form implemented by its authors for real
intervals.  AAA is first applied to equispaced samples on a prescribed positive-frequency interval,
endpoints included, and the interval is then bisected once.  Each subinterval keeps the parent
samples that lie in it and adds the midpoints between consecutive ones.  The sample sets are thus
nested, sample values are cached, and each subinterval needs only about half as many new samples as its
parent.  The poles of the fit on a subinterval are accepted if their number equals that of the parent
fit's poles in the subinterval, or if there are none.  Otherwise the subinterval is bisected again, up
to a maximum depth.  Poles with residue below a small multiple of the median absolute sample value are
discarded as Froissart doublets, spurious pole--zero pairs that the fit introduces to absorb
noise~\cite[\S5]{AAA2018}.  We use the median rather than the maximum, because a sample that falls on a
Bloch frequency has a huge value, against which every genuine pole would look negligible.

Each candidate is refined by the secant method on $g=1/f_\kk$,
\begin{equation}
 \omega^{(n+1)}=\omega^{(n)}-g(\omega^{(n)})
 \frac{\omega^{(n)}-\omega^{(n-1)}}{g(\omega^{(n)})-g(\omega^{(n-1)})},
 \label{eq:secant}
\end{equation}
with the iterates projected onto the real axis.  The iteration stops when $|\omega^{(n+1)}-\omega^{(n)}|$
falls below a tolerance, and this last correction estimates the error of the refined frequency.  The
candidate is accepted if the iteration converges without leaving a prescribed neighbourhood of it.  Close
to a Bloch frequency, $T(\omega,\kk)$ is nearly singular, and GMRES may stagnate before reaching its
tolerance~\cite{BW1997}, although its iterate already points along the null vector.  Inside the secant
iteration, GMRES is therefore capped at three times the median iteration count of the sampling
solves, and at least $60$ iterations, well below the cap used for sampling.  The density
vector~$\bphi_h$ of the resulting mode is the solution of $T(\omega,\kk)\bphi_h=\mathbf q$ at the refined
frequency, one step of inverse iteration, and it is assessed through the extinction
residual~\eqref{eq:residual} below rather than through the convergence of the solve.

Degeneracies and near-degeneracies are generic in band structures, whereas close poles are not
considered in the resonance computations of~\cite{BST2026}.  Two Bloch frequencies much closer together
than the sample spacing may appear as a single pole.  The parent
and subinterval fits then both have one pole there, the acceptance criterion of the adaptive search is
met, and the subinterval is accepted with one frequency missing.  To address this issue, we  add a local check after refinement.  Around each refined frequency $\omega$, we
sample $f_\kk$ at a few equispaced points of $[(1-\delta)\omega,(1+\delta)\omega]$, whose spacing is
comparable to the separations to be resolved, and fit these samples by AAA.  A pole of this local fit
that does not coincide with a frequency already found is refined by the secant method, added, and
checked in the same way.

\subsubsection{Field residuals}
\label{sec:screening}
Each refined candidate comes with a density vector~$\bphi_h$, which we test with three field residuals
that vanish for an exact Bloch mode.  Each residual is the largest absolute value of a field at a few
points of a circle, relative to $\|\bphi_h\|_\infty$.  The extinction residual
\begin{equation}
 \mathrm{res}(\bphi_h)=\max_m|u_U[\bphi_h](\mathbf y_m)|/\|\bphi_h\|_\infty
 \label{eq:residual}
\end{equation}
uses the cell representation~\eqref{eq:rep-cell} at points $\mathbf y_m$ on a circle enclosing $\overline U$.
By Corollary~\ref{cor:curve}, it is small for a genuine mode and of order one for a spurious null
vector.  The other two residuals use $u_U[\bphi_h]$ at points on a circle inside $\Omega$ and
$u_\Omega[\bphi_h]$ at points on a circle around $\Omega$.  These fields vanish for every null vector by
Lemma~\ref{lem:complementary}, and their residuals measure how accurately $\bphi_h$ satisfies the
transmission conditions on $\Gamma_0$, which the extinction residual alone does not detect.  All sample points are away from the source curves, so smooth quadrature suffices.  Since
the corollary and the lemma concern whole curves, the sampled residuals are numerical diagnostics.
Accepted modes are evaluated using~\eqref{eq:rep-cell} in $U\setminus\overline{\Omega}$
and~\eqref{eq:rep-interior} in $\Omega$.

\subsubsection{Assembly reuse along band paths}
\label{sec:continuation}
The Laurent-polynomial structure~\eqref{eq:polystructure-discrete} lets all Bloch vectors of a band path
share assemblies.  The adaptive search of \S\ref{sec:sampling} is run simultaneously for the $K$ path
points, with sample frequencies common to all of them.  At each sample frequency, the thirteen matrices
$T^{(p,q)}(\omega)$ of \S\ref{sec:discretization} are combined with the phase weights of each Bloch
vector without forming a full matrix.  Each Bloch vector keeps its own AAA fits and bisection, so
$M_\omega$ sample frequencies require $M_\omega$ assemblies and at most $KM_\omega$ solves.  Secant
refinement~\eqref{eq:secant} is not shared, since its iterates differ from one Bloch vector to the next,
and costs a few assemblies per Bloch frequency and path point.  Most of this work can run in parallel,
since the sample frequencies of one level of the search, the $K$ solves at one sample frequency, and
the refinements and local checks of different Bloch frequencies are independent.  The timings of
\S\ref{sec:experiments} are for a sequential implementation.

Bands are not traced as curves.  Continuing individual modes along the path, with the derivative of
$T$ in $\kk$ obtained from~\eqref{eq:polystructure-discrete} without new assemblies, lost branches at
degeneracies in our tests.  At each path point the refined frequencies are sorted,
and the $n$th band is the $n$th frequency.  This labelling is unambiguous except at band
crossings and degeneracies, which occur at isolated path points.  At the high-symmetry
points a degenerate frequency appears only once (\S\ref{sec:sampling}), and it is labelled by the nearest band at the neighbouring path
point.  The gaps between consecutive bands, usually the main figure of merit, depend only on the sorted frequencies and are therefore unaffected by crossings.

\section{Numerical experiments}
\label{sec:experiments}

We describe two validation examples with exact or published reference values, and a final set of experiments on a crystal whose cell needs curved walls and nearly singular integrals.  Unless stated otherwise $D_1$ is vacuum, and all media are nonmagnetic, $\mu_1=\mu_2$.  The lattices are square, except the triangular lattice of \S\ref{sec:ex-triangular}, with lattice constant $a$.  Frequencies are reported as $\omega a/(2\pi c)$, where $c$ is the speed of light in vacuum.  In E-polarization $u=E_z$ and $\eta=\mu_1/\mu_2=1$, and in H-polarization $u=H_z$ and $\eta=\varepsilon_1/\varepsilon_2$, as in~\eqref{eq:bvp}; \cite{JJWM2008} calls them TM and TE.  For each experiment we report, where computed, the convergence of the frequencies in the discretization parameters, the extinction residual \eqref{eq:residual} of the computed modes, and the number of
assembled frequencies.

\paragraph{Implementation details.}
The discretization of \S\ref{sec:discretization} uses a quasi-uniform subdivision of each curve, of mesh size $h$, with $q$-point Gauss--Legendre rules, giving $N$ unknowns in total.  The $\mathcal H$-matrices are built by \texttt{HMatrices.jl}~\cite{HMatrices} with tolerance $10^{-8}$, GMRES has relative tolerance $10^{-10}$, and AAA is taken from \texttt{Rational\-Function\-Approximation.jl}. Unless stated otherwise, $h=0.05a$ and $q=9$.

The band search of \S\ref{sec:sampling} uses $33$ equispaced samples per interval, that is, $32$ equal
subintervals, and at most eight bisection levels.  The authors of~\cite{BST2026} recommend at least
$200$ samples per interval for the AAA tolerance $10^{-13}$, but our samples are only as accurate as the
hierarchical compression, and $33$ samples, with the local check, found every Bloch frequency in the
validation examples of \S\ref{sec:ex-bench}.  The crystal of \S\ref{sec:ex-triangular}, with more and
closer bands, needs $65$.

Secant refinement stops when the correction falls below $10^{-10}\max(1,\omega a/c)$, after at most
eight steps.  The local check uses $\delta=10^{-3}$ and $10$ equispaced samples, an even number so that
the refined frequency itself is not sampled.  The sample spacing is then about $2\times10^{-4}\omega$, so
Bloch frequencies closer than about $10^{-4}$ in relative terms may still be merged.

Band diagrams follow the usual convention~\cite{JJWM2008} and plot the Bloch frequencies along the
boundary of the irreducible Brillouin zone, the smallest part of the first Brillouin zone from which the
whole band structure follows by the symmetries of the lattice and the inclusion.  The extrema of the
bands, and hence the gap edges, usually lie on this boundary.  Its vertices are high-symmetry points,
labelled $\Gamma$, X, M for the square lattice and $\Gamma$, M, K for the triangular lattice.  Here
$\Gamma$ is $\kk=0$, and for the square lattice $\mathrm X=(\pi/a,0)$ and $\mathrm M=(\pi/a,\pi/a)$.  Those
of the triangular lattice are given in \S\ref{sec:ex-triangular}.  A band diagram shows $\omega$ against
arc length along the closed path $\Gamma$--X--M--$\Gamma$, with the vertices marked.  For the square
lattice, each of the three segments is divided into $10$ intervals, giving $30$ distinct Bloch vectors.
The validation for the rod lattice at a single Bloch vector uses the generic point
$\kk a/\pi=(0.7,0.3)$.

\paragraph{Measuring accuracy}
Band frequencies are the quantity of interest, and we assess them in four ways.  First, we compare
with reference values, namely the folded cones~\eqref{eq:empty-lattice} for the empty lattice, and the
band edges and gap sizes published in~\cite{JJWM2008} for the rod lattice and for the triangular lattice
of \S\ref{sec:ex-triangular}.  Second, we check self-convergence as the mesh size $h$ and the quadrature
order $q$ are refined.  Third, we report the field residuals of \S\ref{sec:screening}, which assess the
computed density rather than the error in $\omega$.  Fourth, we report the last secant
correction~\eqref{eq:secant}, which~\cite[\S5]{BST2026} use as an error estimate.  It measures the
distance to a root of the discrete system, not the discretization error, and the empty lattice shows
that it can underestimate the actual error. 

\subsection{Validation examples}
\label{sec:ex-bench}

\paragraph{Empty lattice} We first consider a homogeneous medium, $\varepsilon_2=\varepsilon_1$ and $\mu_2=\mu_1$, and keep
$\Gamma_0$ as an artificial interface whose Cauchy data $\Phi_0$ remain unknowns.  We use the square
lattice $\aaa_1=a(1,0)$, $\aaa_2=a(0,1)$, with $\Gamma_0$ the circle of radius $0.2a$, $h=0.05a$ and $q=9$
($N=660$).  The exact photonic bands are the folded light cones~\cite{AM1976,JJWM2008}
\begin{equation}
 \omega(\kk)=c\,\big|\kk+\mathbf{G}\big|,\qquad \mathbf{G}=\tfrac{2\pi}{a}(m,n),\quad m,n\in\mathbb Z,
 \label{eq:empty-lattice}
\end{equation}
where $\mathbf G$ ranges over the reciprocal lattice $\Lambda^*$.  Since $\eta=1$ and $k_2=k_1$, the M\"uller
block $\Lbf^{0,0}$ of~\eqref{eq:rows-transmission} vanishes and $\mathsf E_0=\Id$, so the transmission rows
reduce to $\Phi_0=\sum_j\Lbf^{0,j}\Phi_j$.  For the Cauchy data of an exact Bloch mode, a plane wave
$\mathrm e^{i(\kk+\mathbf G)\cdot\rr}$, the inclusion terms of \eqref{eq:rep-stack} vanish outside $\Omega$ by
Green's formula, but the coupling blocks $\Lbf^{0,j}$ and $\Lbf^{j,0}$ do not.  The example therefore
tests, against an exact reference, the Bloch phases, the orientation signs of \eqref{eq:rep-cell} and
\eqref{eq:rep-stack}, the stack blocks \eqref{eq:blocks-stack} and the coupling between the walls and the
inclusion boundary, but not the M\"uller block, which the next example tests.

\begin{figure}[h!]
\centering
\raisebox{-0.5\height}{\includegraphics{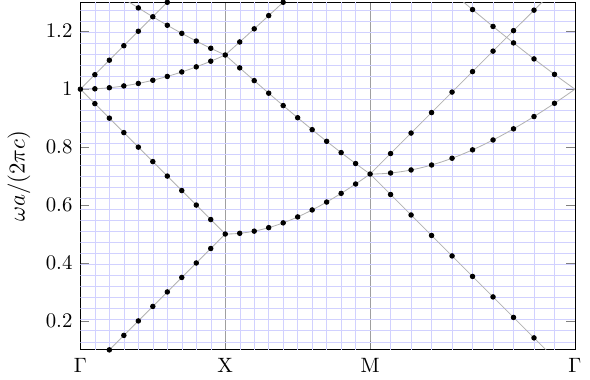}}
\hspace{0.03\textwidth}
\raisebox{-0.5\height}{\includegraphics{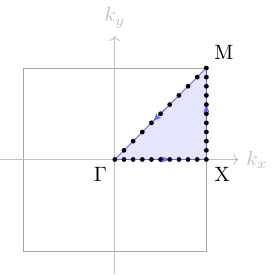}}

\caption{Empty lattice example at $h=0.05a$ and $q=9$ ($N=660$).  Left: exact folded light cones \eqref{eq:empty-lattice}
(lines) and computed bands (dots) against arc length along $\Gamma$--X--M--$\Gamma$, over the
first-level sample grid of the search algorithm (faint lines).  Right: Brillouin zone and irreducible zone (shaded) and the $30$ Bloch vectors of the path.}
\label{fig:empty-lattice}
\end{figure}

We search the frequency window $\omega a/(2\pi c)\in[0.1,1.3]$ along $\Gamma$--X--M--$\Gamma$, with $10$
intervals per segment.  Some exact frequencies lie on the ends of this window, where the first-level
grid places samples, so the search is run on the slightly larger window $[0.09,1.31]$.  With $97$
assembled frequencies, the procedure of \S\ref{sec:aaa} recovers all $93$ distinct exact frequencies at
the $30$ Bloch vectors, up to five at a single Bloch vector, and no spurious frequency.
Figure~\ref{fig:empty-lattice} compares them with \eqref{eq:empty-lattice}, over the first-level grid
of the search, $33$ equispaced frequencies at each of the $30$ Bloch vectors.  The computed frequencies
have absolute error at most $2.1\times10^{-9}$ in $\omega a/c$, with median $1.3\times10^{-10}$, and relative
error at most $6.0\times10^{-10}$, while the AAA poles before refinement are off by up to
$1.1\times10^{-5}$.  The last secant correction \eqref{eq:secant} is at most $7.4\times10^{-10}$, and at
$70$ of the $93$ frequencies it is smaller than the actual error, by up to a factor of about $2\times10^2$.
The secant correction measures the convergence of the iteration to a root of the discrete system,
not the discretization error of the BIE system, which dominates here. We also test three generic Bloch vectors off the symmetry lines, $\kk a/\pi=(0.7,0.3)$, $(0.4,0.1)$ and $(0.9,0.6)$, at which no two exact frequencies coincide.  On the window $\omega a/(2\pi c)\in[0.16,1.27]$,
the same procedure, with $65$ assembled frequencies, finds all $15$ exact frequencies ($5$, $6$ and $4$, respectively) and no spurious one, with absolute errors at most $6.5\times10^{-10}$ in $\omega a/c$.

\paragraph{Square lattice of dielectric rods} Our second example is the square lattice of circular rods of radius $r=0.2a$ and permittivity
$\varepsilon_2=8.9$ in vacuum, in E-polarization, the standard example of \cite[Ch.~5]{JJWM2008}, whose
first two bands are separated by a wide gap.  Unlike the empty lattice, it involves the M\"uller block
on $\Gamma_0$.

We first validate the solver at the generic Bloch vector $\kk a/\pi=(0.7,0.3)$, in the window
$\omega a/c\in[1,6]$, which contains six Bloch frequencies, with $q=9$ and the two mesh sizes $h=0.05a$
and $h=0.025a$.  We assess accuracy by self-convergence, taking the refined frequencies at $h=0.025a$
as reference, and by the field residuals of \S\ref{sec:screening}.  Table~\ref{tab:validation-refined}
gives the refined frequencies at both resolutions.  They agree to $3.8\times10^{-8}$ in $\omega a/c$,
whereas the last secant corrections are at most $2\times10^{-10}$, so, as for the empty lattice, the
secant correction does not estimate the discretization error.  The field residuals are evaluated at
$10$ points on each of three circles, one enclosing the cell for $u_U$, one of radius $0.1a$ inside the
rod for $u_U$, and one of radius $0.3a$ around the rod for $u_\Omega$.  They are at most
$1.2\times10^{-6}$ at $h=0.05a$ and $2.4\times10^{-8}$ at $h=0.025a$, and thus decrease with $h$ like the
frequency error. 

\begin{table}[tb]
\centering\small
\begin{tabular}{cccccc}
  \toprule
  & \multicolumn{2}{c}{$\omega a/(2\pi c)$} & \multicolumn{3}{c}{field residuals, $h=0.025a$} \\
  \cmidrule(lr){2-3}\cmidrule(lr){4-6}
  band & $h=0.05a$ & $h=0.025a$ & $u_U$ outside $U$ & $u_U$ in $\Omega$ & $u_\Omega$ outside $\Omega$ \\
  \midrule
  1 & $0.2454741960$ & $0.2454741910$ & $3.7\times10^{-10}$ & $3.9\times10^{-10}$ & $5.4\times10^{-10}$ \\
  2 & $0.4871327640$ & $0.4871327625$ & $2.3\times10^{-9}$  & $5.3\times10^{-9}$  & $2.8\times10^{-9}$ \\
  3 & $0.6101824436$ & $0.6101824422$ & $4.7\times10^{-10}$ & $1.0\times10^{-9}$  & $5.8\times10^{-10}$ \\
  4 & $0.7306238758$ & $0.7306238765$ & $2.2\times10^{-10}$ & $4.2\times10^{-10}$ & $2.5\times10^{-10}$ \\
  5 & $0.8523783825$ & $0.8523783886$ & $1.3\times10^{-8}$  & $2.4\times10^{-8}$  & $1.4\times10^{-8}$ \\
  6 & $0.9344449188$ & $0.9344449203$ & $9.9\times10^{-10}$ & $1.7\times10^{-9}$  & $1.3\times10^{-9}$ \\
  \bottomrule
\end{tabular}
\caption{Bloch frequencies after secant refinement, $q=9$ ($N=660$ and $1310$), for the
square lattice of rods, $r=0.2a$, $\varepsilon_2=8.9$, at $\kk a/\pi=(0.7,0.3)$, and the field residuals of \S\ref{sec:screening} of
the modes at $h=0.025a$, relative to $\|\bphi_h\|_\infty$.}
\label{tab:validation-refined}
\end{table}

Timings are wall-clock on an Apple M3 Max, with one Julia thread and multithreaded BLAS.  With $N=660$,
assembling the operators at one frequency, compression included, takes $0.10$\,s, and one GMRES solve
takes $0.05$\,s and $18$ iterations, or $19$ at a distance $10^{-4}$ from a Bloch frequency.  The
iteration count grows closer to a Bloch frequency.  At $\kk a/\pi=(0.02,0)$ on
$\omega a/(2\pi c)\in[0.2,1.1]$, GMRES takes $15$ to $36$ iterations (median $22$) at the $97$ samples of
the search, and up to $60$ (median $35$) during refinement, which samples much closer to the Bloch
frequencies.  The complete computation of Figure~\ref{fig:empty-lattice}, with search,
refinement and local checks at the $30$ path points, took $348$\,s.

Figure~\ref{fig:band-diagram} shows the bands of the rod lattice along $\Gamma$--X--M--$\Gamma$, with
$10$ intervals per segment, in the window $\omega a/(2\pi c)\in[0.01,1.1]$, searched on the slightly
larger window $[0.005,1.11]$.  The search, shared by the $30$ distinct Bloch vectors, starts from $33$ equispaced
frequencies, shown in the figure, and assembled $97$ frequencies in total, and refinement with the local check returned $232$ Bloch frequencies, between $5$ and $9$
per Bloch vector, in $948$\,s in total.  The first E-polarization gap lies between the first band at M, $0.32239468$, and the
second band at X, $0.44251872$.  Its gap-to-midgap ratio, the gap width divided by its central frequency, $31.4\%$, agrees with the value
reported
in~\cite[Ch.~5, p.~69]{JJWM2008}, where E-polarization is called TM.

\begin{figure}[tb]
\centering
\includegraphics{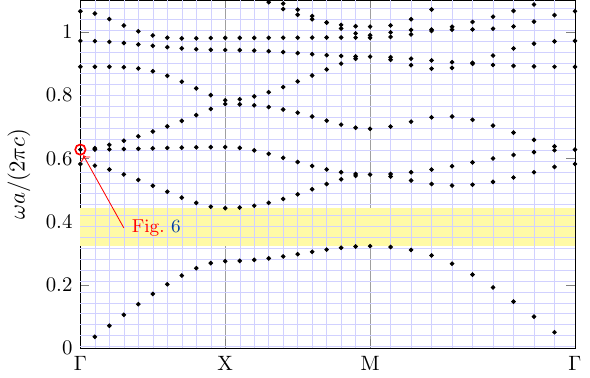}
\caption{Square lattice of rods example, $r=0.2a$, $\varepsilon_2=8.9$, E-polarization, at $h=0.05a$ and
$q=9$ ($N=660$).  Computed bands (dots) against arc length along $\Gamma$--X--M--$\Gamma$, at the Bloch
vectors of Figure~\ref{fig:empty-lattice}, over the first-level sample grid of the search algorithm
(faint lines).  The shaded band is the first gap, and the acoustic band at $\Gamma$ lies below the
window.  The red circle marks the close pair at $\kk a/\pi=(0.003,0)$ of
Figure~\ref{fig:search-anatomy}.}
\label{fig:band-diagram}
\end{figure}

At $\Gamma$ the third and fourth bands are degenerate, at $\omega a/(2\pi c)\approx0.628$, and along
$\Gamma$--X they separate as $|\kk|$ grows.  Such close pairs are hard for the search of
\S\ref{sec:sampling}, since two poles much closer together than the sample spacing look like one.  We
test this at Bloch vectors $\kk a/\pi=(t,0)$ on $\Gamma$--X close to $\Gamma$, on the window
$\omega a/(2\pi c)\in[0.2,1.1]$.  With the local check, the search finds both frequencies for $t=0.01$,
$0.005$, $0.003$ and $0.002$, where they are $4.9\times10^{-5}$, $1.2\times10^{-5}$, $4.5\times10^{-6}$ and
$2.0\times10^{-6}$ apart in $\omega a/(2\pi c)$, but only one of them for $t=0.001$, where they are about
$5\times10^{-7}$ apart.

Figure~\ref{fig:search-anatomy} shows the search at $t=0.003$, marked in Figure~\ref{fig:band-diagram},
where the pair is $0.6278020876$ and $0.6278065385$ and the window contains six Bloch frequencies.  The
search bisects the window once and assembles $65$ frequencies.  Every fit has a single pole near the
pair, at $0.6278026259$, from which secant refinement converges to one of the two frequencies, and the
local check, with ten samples on $\omega(1\pm10^{-3})$, finds the other.  For comparison, the figure also
shows the two smallest singular values $\sigma_1\le\sigma_2$ of the dense matrix $T(\omega,\kk)$.  At this
resolution $\sigma_1$ has a single minimum at the pair, and only $\sigma_2$ reveals a second Bloch
frequency, dropping there to $1.2\times10^{-5}$, the size of $\sigma_1$, against about $5\times10^{-3}$
at the other frequencies.  The inset, computed from $201$ dense decompositions within
$1.8\times10^{-5}$ of the pair, resolves the two minima of $\sigma_1$.

\begin{figure}[tb]
\centering
\includegraphics{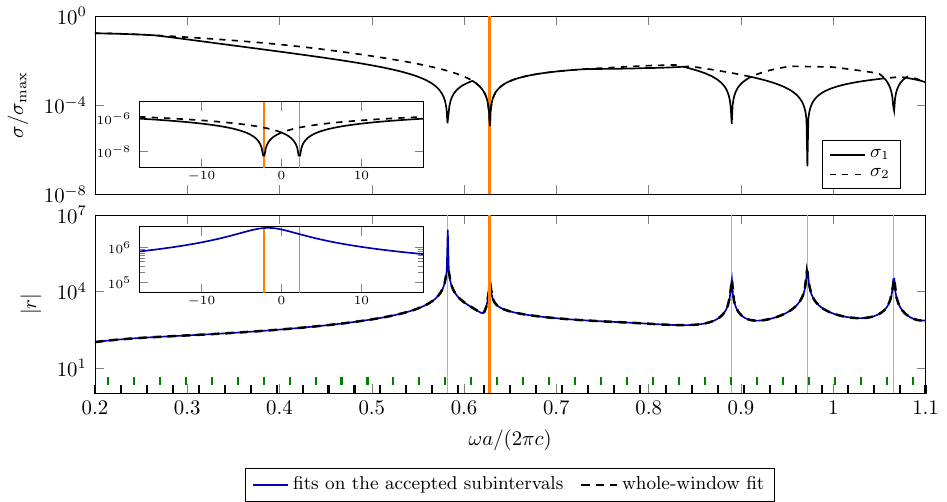}
\caption{The search of \S\ref{sec:sampling} on the rod lattice at $\kk a/\pi=(0.003,0)$.  Top: the two smallest
singular values of the dense matrix $T(\omega,\kk)$.  Bottom: the AAA fits on the whole window and on the
accepted subintervals, with the $33$ initial samples (black) and the $32$ added on the halves (green).
Grey lines mark the other four Bloch frequencies, and orange lines the pair, $4.5\times10^{-6}$ apart.
The insets zoom on the pair, against the offset $10^6(\omega a/(2\pi c)-0.6278043)$.  The fits have a
single pole there, and secant refinement with the local check recovers both frequencies.}

\label{fig:search-anatomy}
\end{figure}

\subsection{A triangular lattice requiring curved walls}
\label{sec:ex-triangular}
Our last example is a triangular lattice, spanned by $\aaa_1=a(1,0)$ and
$\aaa_2=a(\tfrac12,\tfrac{\sqrt3}2)$, of circular air columns of radius $0.48a$ in a dielectric of
permittivity $\varepsilon_1=13$.  Its band gaps in the two polarizations overlap in a complete gap with
gap-to-midgap ratio $18.6\%$~\cite[Ch.~5, pp.~75--76]{JJWM2008}.
Between three neighbouring columns, the dielectric forms spots joined by narrow veins
(Figure~\ref{fig:experiments}).  The isolated spots favour a gap in E-polarization, and the connected
veins a gap in H-polarization~\cite[Ch.~5]{JJWM2008}.  We deform the columns into the pores
\[
 r_\beta(\theta)\,(\cos\theta,\sin\theta),\qquad
 r_\beta(\theta)=0.48a-\beta\,(1-\cos6\theta),\qquad\theta\in[0,2\pi),
\]
centred at the lattice points.  Toward the six nearest neighbours, $\theta=j\pi/3$, the radius stays
$0.48a$, so the narrowest vein is $0.04a$ wide for every $\beta$.  Toward the spots, the boundary moves
inward by $2\beta$, and it becomes non-convex for $\beta>0.48a/38\approx0.0126a$.  We validate against
the crystal of~\cite{JJWM2008}, $\beta=0$, and study $\beta=0.04a$, for which the dielectric fills
$29.5\%$ of the cell instead of $16.4\%$ (Figure~\ref{fig:experiments}).  We compute both polarizations.
Since the pores are air, $\varepsilon_2=1$, H-polarization has $\eta=\varepsilon_1/\varepsilon_2=13$.%

Straight walls are not suitable here since the sides of the parallelogram centred on the pore cut the
circle of $\beta=0$ and pass only $2\times10^{-4}a$ from the pore of $\beta=0.04a$.  We then place the
corner at the midpoint of a vein, $\mathbf c=-(\aaa_1+\aaa_2)/2$, and use the curved walls
\begin{gather*}
 \mathbf r_1(t)=\mathbf c+t\aaa_2+\chi_1(t)\,\mathbf m_2,\qquad
 \mathbf r_2(t)=\mathbf c+t\aaa_1+\chi_2(t)\,\mathbf m_1,\qquad t\in I_1=I_2=[0,1],\\
 \chi_2(t)=-\chi_1(t)=b\,a\sin(2\pi t),
\end{gather*}
in the notation of~\eqref{eq:wall-param}, where $\mathbf m_1=(0,-1)$ and $\mathbf m_2=(\tfrac{\sqrt3}2,-\tfrac12)$ are
the unit vectors to the right of $\aaa_1$ and $\aaa_2$, and $b=-0.0956$ for $\beta=0.04a$ and $b=-0.0929$ for $\beta=0$
maximizes the distance from the walls to the pores.  The walls cross the veins at their midpoints,
where they attain their shortest distance to the pores, $0.02a$.  The Brillouin zone is sampled along $\Gamma$--M--K--$\Gamma$, with
$\mathrm M=(0,\tfrac{2\pi}{\sqrt3a})$ and $\mathrm K=(\tfrac{2\pi}{3a},\tfrac{2\pi}{\sqrt3a})$, with
$10$, $6$ and $12$ intervals on the three segments, that is, $28$ distinct Bloch vectors about
$0.35/a$ apart (Figure~\ref{fig:experiments}, left).

\begin{figure}[tb]
\centering
\raisebox{-0.5\height}{\includegraphics{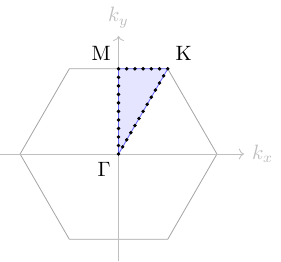}}
\hspace{0.04\textwidth}
\raisebox{-0.5\height}{\includegraphics{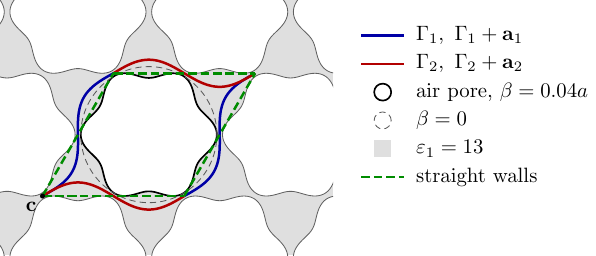}}
\caption{Left: Brillouin zone of the triangular lattice, irreducible zone (shaded) and the $28$ Bloch
vectors of the path $\Gamma$--M--K--$\Gamma$.  Right: the cell of \S\ref{sec:ex-triangular}, one air pore with
$\beta=0.04a$ and its nearest translates, the circle of radius $0.48a$ of $\beta=0$ (dashed), and the
curved walls with their translates; the corner $\mathbf c$ lies at the midpoint of a vein.  Dashed: the
straight walls used in Table~\ref{tab:triangular}.}
\label{fig:experiments}
\end{figure}

This crystal tests the main features of the method, namely a non-convex pore, an oblique
lattice, $\eta\ne1$ in the transmission rows, and walls, corners of $\partial U$ included, within
$0.02a\approx0.4h$ of the pores, where density interpolation (\S\ref{sec:discretization}) evaluates the
nearly singular integrals.  Since the reference value of~\cite{JJWM2008} is a complete gap, it also
tests that no band of either polarization is missed.

Unless stated otherwise, the values $h=0.05a$ and $q=9$ ($N=1040$ for $\beta=0.04a$) are used in the numerical experiments that follow.  We first repeated the empty-lattice test on the oblique lattice.  With $\varepsilon_2=\varepsilon_1=13$ the pore of
$\beta=0.04a$ is invisible, and at M the search returns the three folded-cone frequencies
$|\kk+\mathbf G|/(2\pi\sqrt{13})$ in $[0.01,0.45]$, each degenerate pair once, with errors at most
$4.8\times10^{-10}$ and field residuals below $4\times10^{-9}$.

For the actual crystal, with air pores and $\beta=0.04a$, Table~\ref{tab:triangular} gives the Bloch
frequencies at $\kk a/\pi=(0.3,0.5)$, where the window $[0.01,0.8]$ contains eight E- and five H-polarized frequencies.  Halving $h$ changes them by at most $2.1\times10^{-7}$ (E) and $4.3\times10^{-6}$ (H) and divides the field residuals by about $25$.
Replacing the curved walls by the straight ones, which meet at $\mathbf c$ at $60^\circ$ instead of
$2^\circ$, changes the frequencies by at most $1.1\times10^{-6}$ (E) and $4.2\times10^{-5}$ (H) at $h=0.05a$, and by $1.4\times10^{-8}$ and
$1.5\times10^{-7}$ at $h=0.025a$, below the discretization error of the curved walls at $h=0.05a$.  For
the straight walls in E-polarization at $h=0.025a$, $33$ samples per interval miss one frequency, so
this comparison uses $65$.  The choice of walls thus affects the frequencies only through the discretization error, and curving
the walls away from the pores keeps this error small on a coarse mesh.

Next, Figure~\ref{fig:triangular-bands} shows the bands along $\Gamma$--M--K--$\Gamma$ in $[0.01,0.8]$,
searched with $65$ samples per interval, keeping the frequencies whose three field residuals are at
most $10^{-5}$.  The kept frequencies, $168$ (E) and $93$ (H) for $\beta=0$ and $222$ and $152$ for
$\beta=0.04a$, have residuals at most $2.6\times10^{-6}$ and last secant steps at most $3\times10^{-8}$
($1.7\times10^{-6}$ in H-polarization for $\beta=0.04a$).  For $\beta=0$, refinement also returned two spurious frequencies, about $0.6900$ and $0.7996$, at nearly every Bloch vector and in both polarizations.  They do not depend on $\kk$, so they are not actual
bands.  The matrix $T(\omega,\kk)$ is nearly singular there, but the computed fields do not vanish outside the cell, and
their residuals, between $3.4\times10^{-5}$ and $2.5\times10^{-3}$, exceed the threshold, so the test of
\S\ref{sec:screening} removes them.  They do not occur for $\beta=0.04a$.

With $33$ samples per interval, as in \S\ref{sec:ex-bench}, the search finds the same frequencies for
$\beta=0$, but for $\beta=0.04a$ it misses $19$ E- and $15$ H-polarized frequencies, at $5$ and $4$ of
the $28$ Bloch vectors.  The cause is close pairs.  Between M and K, for instance, the H-polarized
frequencies $0.46496$ and $0.48260$ are closer than the sample spacing $0.025$ and appear as a single
pole, so the bisection stops.  The misses are not due to the accuracy of the samples, since assembling $T(\omega,\kk)$ without
compression and solving by LU instead of GMRES, or halving $h$, misses the same frequencies, while
$65$ samples find them.  To confirm the gaps, we searched each gap window again at all $28$ Bloch vectors with two independent pairs of probes \eqref{eq:scalarized}, and found no frequency inside any gap.

Table~\ref{tab:triangular-gaps} provides the gaps. In both crystals the E-polarized gap, between bands $2$
(maximum at $\Gamma$) and $3$ (minimum at K), lies inside the H-polarized gap, between bands $1$
(maximum at K) and $2$ (minimum at M), as in~\cite[Ch.~5]{JJWM2008}, so the complete gap is the
E-polarized one.  For $\beta=0$ it is $18.96\%$, against $18.6\%$ in~\cite{JJWM2008}.  The difference is
due neither to the sampling of the path, since both edges lie at vertices of the irreducible Brillouin
zone, nor to discretization, since $h=0.025a$ changes the edges by at most $1.1\times10^{-7}$.  For
$\beta=0.04a$ the complete gap shrinks to $0.3589$--$0.4206$, or $15.84\%$.
Finally, Figure~\ref{fig:triangular-modes} shows the E-polarized modes at its edges, evaluated from
\eqref{eq:rep-cell} and \eqref{eq:rep-interior} with density interpolation.  The second band at $\Gamma$
is concentrated in the dielectric spots, with alternating signs, and the third band at K lies mostly in
the pores.  Both are smooth across the walls, with no artifacts near $\mathbf c$ or the vein crossings.

\begin{table}[tb]
\centering\small
\begin{tabular}{llccccc}
  \toprule
  & & \multicolumn{2}{c}{field residual} & & \multicolumn{2}{c}{straight walls} \\
  \cmidrule(lr){3-4}\cmidrule(lr){6-7}
  & $\omega a/(2\pi c)$ & $h=0.05a$ & $h=0.025a$ & $h\to h/2$ & $h=0.05a$ & $h=0.025a$ \\
  \midrule
  \multirow{8}{*}{E} & $0.1334984291$ & $1.7\times10^{-7}$ & $3.0\times10^{-9}$ & $4.2\times10^{-9}$ & $8.8\times10^{-9}$ & $2.2\times10^{-10}$ \\
   & $0.3296942764$ & $4.0\times10^{-8}$ & $2.9\times10^{-9}$ & $5.3\times10^{-8}$ & $1.4\times10^{-7}$ & $1.4\times10^{-9}$ \\
   & $0.4678061655$ & $7.8\times10^{-8}$ & $2.0\times10^{-9}$ & $2.1\times10^{-7}$ & $1.1\times10^{-6}$ & $6.2\times10^{-9}$ \\
   & $0.5004011835$ & $9.5\times10^{-8}$ & $4.6\times10^{-9}$ & $1.5\times10^{-8}$ & $5.3\times10^{-8}$ & $9.5\times10^{-10}$ \\
   & $0.5725763436$ & $2.8\times10^{-8}$ & $1.4\times10^{-9}$ & $4.1\times10^{-8}$ & $1.1\times10^{-7}$ & $2.0\times10^{-9}$ \\
   & $0.6094885117$ & $8.1\times10^{-8}$ & $3.6\times10^{-9}$ & $5.6\times10^{-8}$ & $1.1\times10^{-7}$ & $1.4\times10^{-8}$ \\
   & $0.7468954988$ & $8.7\times10^{-8}$ & $3.4\times10^{-9}$ & $9.3\times10^{-8}$ & $5.6\times10^{-7}$ & $1.0\times10^{-10}$ \\
   & $0.7833974636$ & $1.3\times10^{-7}$ & $6.7\times10^{-9}$ & $4.1\times10^{-8}$ & $7.1\times10^{-9}$ & $2.7\times10^{-9}$ \\
  \midrule
  \multirow{5}{*}{H} & $0.1744884113$ & $2.0\times10^{-7}$ & $3.1\times10^{-8}$ & $5.7\times10^{-7}$ & $6.7\times10^{-6}$ & $1.5\times10^{-7}$ \\
   & $0.5232515045$ & $1.3\times10^{-6}$ & $3.7\times10^{-8}$ & $4.3\times10^{-6}$ & $4.2\times10^{-5}$ & $1.3\times10^{-7}$ \\
   & $0.5465024337$ & $1.0\times10^{-6}$ & $5.7\times10^{-8}$ & $9.0\times10^{-7}$ & $3.9\times10^{-6}$ & $4.5\times10^{-8}$ \\
   & $0.6726586276$ & $1.2\times10^{-7}$ & $5.2\times10^{-9}$ & $1.2\times10^{-7}$ & $2.9\times10^{-6}$ & $1.5\times10^{-9}$ \\
   & $0.6936202922$ & $8.2\times10^{-7}$ & $9.7\times10^{-9}$ & $5.7\times10^{-7}$ & $1.9\times10^{-6}$ & $2.0\times10^{-8}$ \\
  \bottomrule
\end{tabular}
\caption{Bloch frequencies of the crystal of Figure~\ref{fig:experiments} ($\beta=0.04a$) at
$\kk a/\pi=(0.3,0.5)$ in $\omega a/(2\pi c)\in[0.01,0.8]$, computed with $h=0.05a$ and $q=9$ ($N=1040$).
The second column gives the frequencies, the third and fourth the largest of the three field residuals
of \S\ref{sec:screening} at $h=0.05a$ and at $h=0.025a$ ($N=2070$), the fifth the change of the frequency
when $h$ is halved, and the sixth and seventh its change when the curved walls are replaced by the
straight ones, at $h=0.05a$ and at $h=0.025a$.}
\label{tab:triangular}
\end{table}

\begin{figure}[tb]
\centering
\includegraphics{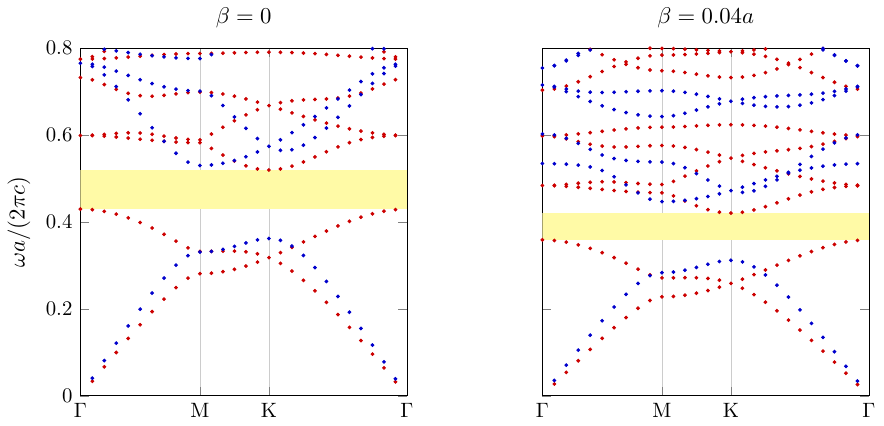}
\caption{Bands of the crystal of Figure~\ref{fig:experiments} along $\Gamma$--M--K--$\Gamma$ for
$\beta=0$ (left) and $\beta=0.04a$ (right), E-polarization (red) and H-polarization (blue), $h=0.05a$,
$q=9$, $65$ samples per interval, frequencies with field residuals at most $10^{-5}$.  The shaded band
is the complete gap.  Degenerate bands appear once, and the acoustic bands at
$\Gamma$ lie below the window.}
\label{fig:triangular-bands}
\end{figure}

\begin{table}[tb]
\centering\small
\begin{tabular}{lcccccc}
  \toprule
  & \multicolumn{2}{c}{E-polarization} & \multicolumn{2}{c}{H-polarization} & \multicolumn{2}{c}{complete gap} \\
  \cmidrule(lr){2-3}\cmidrule(lr){4-5}\cmidrule(lr){6-7}
  & edges & gap & edges & gap & gap & \cite{JJWM2008} \\
  \midrule
  $\beta=0$     & $0.4297$--$0.5197$ & $18.96\%$ & $0.3620$--$0.5300$ & $37.67\%$ & $18.96\%$ & $18.6\%$ \\
  $\beta=0.04a$ & $0.3589$--$0.4206$ & $15.84\%$ & $0.3119$--$0.4469$ & $35.58\%$ & $15.84\%$ & \\
  \bottomrule
\end{tabular}
\caption{Gaps of the crystal of Figure~\ref{fig:experiments}, for $\beta=0$ and $\beta=0.04a$, with their edges in
$\omega a/(2\pi c)$ and their gap-to-midgap ratios.}
\label{tab:triangular-gaps}
\end{table}

\begin{figure}[tb]
\centering
\begin{tabular}{cc}
  \includegraphics{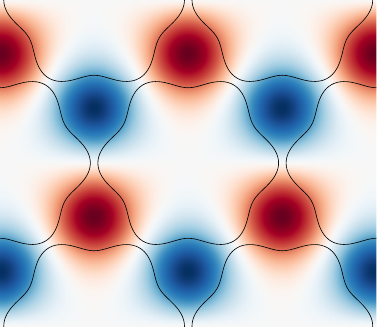} &\qquad
  \includegraphics{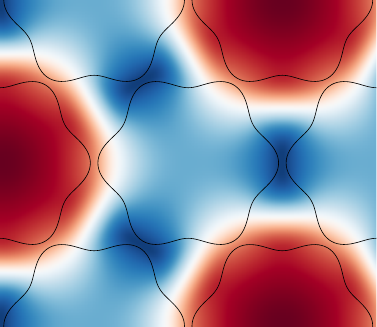} \\
  \small $\Gamma$, $\omega a/(2\pi c)=0.3589$ (lower edge) & \small K, $\omega a/(2\pi c)=0.4206$ (upper edge)
\end{tabular}
\caption{E-polarized Bloch modes at the edges of the complete gap of the crystal of
Figure~\ref{fig:experiments} ($\beta=0.04a$), computed with $h=0.05a$ and $q=9$.  Each panel shows the
real part of the mode on $[-a,a]\times[-\tfrac{\sqrt3}2a,\tfrac{\sqrt3}2a]$, centered on a pore (pore
boundaries in black), after normalizing the mode to maximum modulus $1$ and making it real and positive
where its modulus is largest.  Colors range from dark blue ($-1$) through white ($0$) to dark red ($1$).}
\label{fig:triangular-modes}
\end{figure}

\section{Conclusion}
\label{sec:conclusion}
We have presented a direct boundary integral formulation of the Bloch eigenvalue problem for
two-dimensional photonic crystals that uses only the free-space Green function.  Green's formula on
three-cell stacks yields wall equations free of corner singularities, and with the M\"uller combination
on the inclusion boundary the system is of the second kind.  The Bloch vector enters through finitely
many monomials, so the operators are assembled once per frequency, and spurious null vectors are
detected by an inexpensive residual test.  Combined with a high-order Nystr\"om discretization and an
AAA search, the method reproduces exact and published bands with high accuracy, and handles non-convex pores whose cell
requires curved walls.  For close bands, the reliability of the search is limited by the sample
spacing rather than by the accuracy of the samples.

The cost is dominated by the assembly at every sampled frequency. An efficiency improvement would be to use the $\mathcal H$-tensor format
of~\cite{BHM2026} (see also~\cite{BHM2025}), which would reduce the frequency sweep to little more than
matrix--vector products, and its linearization would give the Bloch frequencies directly as
eigenvalues.  Since only the boundaries are discretized, the method also suits shape uncertainty
quantification of band gaps~\cite{HSSS2018,Sca2019,Sca2022}, for instance with pore contours extracted
from images of manufactured crystals~\cite{CSRSVV2024}.  Finally, the stack construction extends to the
three-dimensional Helmholtz equation, with one stack per lattice direction, and similarly to the
Maxwell equations, for which the M\"uller combination was originally introduced~\cite{Mueller1969}.

\section*{Acknowledgments}
TY gratefully acknowledges support from the National Key R\&D Program of China
(No.~2024YFA1016000), the Beijing Natural Science Foundation (No.~JR26005) and the Strategic Priority
Research Program of the Chinese Academy of Sciences (No.~XDB0640000).

\paragraph{Use of AI tools.}
The authors used the AI assistant Claude (Anthropic) as an aid in writing the code specific to this
paper, which is built on the libraries \texttt{Inti.jl}, \texttt{HMatrices.jl} and
\texttt{Rational\allowbreak Function\allowbreak Approximation.jl}, in producing the figures with TikZ and PGFPlots, and in editing the
text. The authors reviewed all its output and take full responsibility for the content of this paper.

\bibliographystyle{abbrv}
\bibliography{References}
\end{document}